\documentclass[a4paper,12pt]{article}

\usepackage{authblk}
\usepackage{amsmath,amsthm,amsfonts,amssymb}
\usepackage{bbm}
\usepackage[left=2.5cm, right=2.5cm, top=2cm, bottom=3.5cm]{geometry}
\usepackage[noadjust]{cite}
\usepackage[pdfstartview=FitH,
			CJKbookmarks=true,
			bookmarksnumbered=true,
			colorlinks, 
			linkcolor=blue,
			anchorcolor=blue,
			citecolor=blue,
			urlcolor=blue
			]{hyperref}

\newtheorem{thm}{Theorem}

\newtheorem{lem}[thm]{Lemma}

\def\a{\alpha}
\def\b{\beta}
\def\l{\lambda}
\def\g{\gamma}
\def\z{\zeta}
\newcommand{\df}{{\rm d}}
\newcommand{\nb}{\nabla}
\newcommand{\lp}{\ell_P}
\newcommand{\rnl}{\widehat N}
\newcommand{\jap}{J_\a^p}

\newcommand{\R}{\mathbb{R}}
\newcommand{\rn}{\R^n}
\newcommand{\jz}{\mathbb{M}^n}
\newcommand{\sph}{S^{n-1}}
\newcommand{\hsd}{\mathcal H^{n-1}}
\newcommand{\sln}{{\rm SL}(n)}
\newcommand{\gln}{{\rm GL}(n)}

\newcommand{\kn}{\mathcal K^n}
\newcommand{\kno}{\mathcal K^n_o}
\newcommand{\polyo}{\mathcal P^n_o}

\newcommand{\sobo}{W^{1,p}(\rn)}
\newcommand{\sobop}{W^{1,p}(\rn)}
\newcommand{\soboe}{W^{1,2}(\rn)}
\newcommand{\lip}{L^{1,p}(\rn)}
\newcommand{\pip}{P^{1,p}(\rn)}
\newcommand{\lprn}{L^p(\rn)}
\newcommand{\bp}{\mathcal B_p}

\newcommand{\zlt}{\mathbbm{t}}
\newcommand{\symt}{{\rm Sym}^p(\rn)}
\newcommand{\mop}{M^{0,p}}
\newcommand{\mpo}{M^{p,0}}
\newcommand{\ten}[2]{{#1}^{\odot{#2}}}
\newcommand{\tenp}[1]{{#1}^{\odot p}}

\newcommand{\set}[1]{\left\{{#1}\right\}}						%set
\newcommand{\xset}[1]{\{{#1}\}}										%small set
\newcommand{\abs}[1]{\left\vert{#1}\right\vert}				%absolute value
\newcommand{\norm}[1]{\left\Vert{#1}\right\Vert}		%norm
\newcommand{\normk}[2]{\left\Vert{#2}\right\Vert_{#1}}
\newcommand{\inp}[2]{\left\langle#1,#2\right\rangle}	%inner product

\makeatletter
\newcommand{\subjclass}[2][1991]{%
	\let\@oldtitle\@title%
	\gdef\@title{\@oldtitle\footnotetext{#1 \emph{Mathematics subject classification.} #2.}}%
}

\newcommand{\keywords}[1]{%
	\let\@@oldtitle\@title%
	\gdef\@title{\@@oldtitle\footnotetext{\emph{Key words and phrases.} #1.}}%
}
\makeatother

\title{{\bf Tensor valuations on Sobolev spaces}} 
\author{Tian Gao and Dan Ma\thanks{Corresponding author: \href{mailto:madan@shnu.edu.cn}{madan@shnu.edu.cn}}\\
	Department of Mathematics, Shanghai Normal University, Shanghai 200234, China}

\date{}
\subjclass[2020]{52B45, 52A21, 46E35}
\keywords{Fisher information tensor, valuation, Sobolev space}

\begin{document}
	
\maketitle
	
\begin{abstract}
A complete classification is established for continuous, \(\sln\) contravariant, and translation invariant tensor valuations defined on the Sobolev space \(\sobo\).
When these valuations are further assumed to be homogeneous, the classification reveals that they are precisely the Fisher information tensors, 
which constitute a higher-order generalization of the Fisher information matrix.
\end{abstract}

%%%%%%%%%%%%%%%%%%%%%%%%%%%%%%%%%%%%%%%%%%%%%%%%%%%%%%%%%%%%%%%%%%%%%%%%%%%%%%%%
%%%%%%%%%%%%%%%%%%%%%%%%%%%%%%%%%%%%%  Sections %%%%%%%%%%%%%%%%%%%%%%%%%%%%%%%%%%%%%
%%%%%%%%%%%%%%%%%%%%%%%%%%%%%%%%%%%%%%%%%%%%%%%%%%%%%%%%%%%%%%%%%%%%%%%%%%%%%%%%
	
\section{Introduction}

Let \(\mathcal S\) be a lattice of real-valued functions and let \(\langle\mathcal A,+\rangle\) be an Abelian semigroup. A map \(Z:\mathcal S\to\mathcal A\) is a valuation when
\begin{equation}\label{val}
	Z(f\vee g)+Z(f\wedge g)=Z(f)+Z(g)
\end{equation}
for every \(f,g\in\mathcal S\), where \(f\vee g\) and \(f\wedge g\) denote the pointwise maximum and minimum, respectively. For valuations on sets, the same identity is written with union and intersection in place of pointwise maximum and minimum.
This concept originated from Dehn's solution of Hilbert's third problem in 1901.
Later, Hadwiger established that every continuous, rigid motion invariant real-valued valuation on convex bodies decomposes into a linear combination of intrinsic volumes. This foundational result has since become a cornerstone of integral geometry (see \cite{1997KR}). 
For new results on valuations on convex sets, see \cite{2012Haberl,2010LR,2014HP,2017LM,2003Monika,2017LS}.

Recently, the study of valuations has gradually extended from spaces of sets to function spaces. 
Classification theory has already been developed on Sobolev and \(L^p\) lattices \cite{2011Monika,2012Monika,2016Ma,2010Tsang,2012Tsang,2013Monika,2017LM,2021WHL,2024WTL,2025LZ}. 
Related theories for convex, Lipschitz, and definable functions are treated in
\cite{2013BGW,2020CPTV,2021CPTV,2022CLM,2023CLM,2024CLM,2025CLM}.
Here we study the tensor-valued valuations (briefly tensor valuations) on the Sobolev space \(W^{1,p}(\rn)\).

Throughout the paper, let \(n\geq3\) and let \(p\) be an integer satisfying \(2\leq p<n\). The same integer \(p\) specifies both the Sobolev exponent and the tensor order. We write \(\symt\) for the space of symmetric tensors of order \(p\) on \(\rn\). 
For \(x\in\rn\), set
\[x^{\odot p}=\underbrace{x\odot\cdots\odot x}_{p\text{ factors}},\]
with \(\odot\) representing the symmetric tensor product. The gradient tensor to be characterized is
\begin{equation}\label{eqn:gradient-tensor}
	\mathcal I_p(f)=\int_{\rn}(\nb f(x))^{\odot p}\,\df x,
	\qquad f\in W^{1,p}(\rn).
\end{equation}
The integral is finite since \(\nb f\in L^p(\rn)\). The weak gradient identity above shows that \(\mathcal I_p\) satisfies \eqref{val}, and its continuity on \(W^{1,p}(\rn)\) follows from H\"older's inequality.
Related tensor valuations on convex bodies, including those covariant under isometries or the special linear group, are studied in
\cite{1997McMullen,1999AleskerAnn,1999Alesker,2010LR,2014HP,2016HP,2017HP,2014HS,2017JK,2017LS,2021BDS,2019ABDK}.

The order-two case of \eqref{eqn:gradient-tensor} is related to the classical Fisher information matrix. For a weakly differentiable function \(g:\rn\to[0,\infty)\) for which the integral below is finite, let \(J(g)\) be the matrix with entries
\[J_{ij}(g)=\int_{\rn}\frac{\partial\log g(x)}{\partial x_i}\frac{\partial\log g(x)}{\partial x_j}g(x)\,\df x.\]
For \(f\in W^{1,2}(\rn)\), it gives
\begin{equation*}\label{eqn:fisher-matrix}
	J_{ij}(f^2)=4\int_{\rn}
	\frac{\partial f(x)}{\partial x_i}
	\frac{\partial f(x)}{\partial x_j}\,\df x.
\end{equation*}
Lutwak, Yang, and Zhang initiated the study of this order-two Fisher information tensor in affine convex geometry. In \cite{2000LYZ}, they introduced the ellipsoid now known as the LYZ ellipsoid. 
They later established a correspondence between this ellipsoid and the one determined by the Fisher information matrix, and obtained a convex-geometric counterpart of the Cram\'{e}r--Rao inequality \cite{2002LYZ}.

Using this connection, Ludwig completely classified the continuous affinely contravariant matrix-valued valuations on \(W^{1,2}(\rn)\) in \cite{2011Monika}. 
To state her result, let \(\jz\) be the space of symmetric real \(n\times n\) matrices. 
A map \(Z:\soboe\to\jz\) is called \(\gln\) contravariant if for some \(r\in\R\),
\[Z(f\circ\phi^{-1})=\abs{\det\phi}^r\phi^{-t}Z(f)\phi^{-1}\]
for every \(f\in\soboe\) and \(\phi\in\gln\), where \(\det\phi\) is the determinant of \(\phi\)
and \(\phi^{-t}\) is the inverse of the transpose of \(\phi\). It is called translation invariant if
\(Z(f\circ\tau^{-1})=Z(f)\) for every \(f\in\soboe\) and translation \(\tau\),
and homogeneous if for some \(q\in\R\), \(Z(sf)=\abs{s}^qZ(f)\) for every \(f\in\soboe\) and \(s\in\R\). The map \(Z\) is called affinely contravariant if it has all three properties.
\begin{thm}[\cite{2011Monika}]\label{thm:Ludwig}
	Let \(n>2\). A map \(Z:W^{1,2}(\rn)\to\jz\) is a continuous, affinely
	contravariant valuation if and only if there exists \(c\in\R\) such that
	\[
	Z(f)=cJ(f^2)
	\]
	for every \(f\in W^{1,2}(\rn)\).
\end{thm}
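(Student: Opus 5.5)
The "if" direction is immediate. Since \(J(f^2)=4\,\mathcal I_2(f)\), the map \(cJ(f^2)\) is a valuation, because \(\nabla(f\vee g)\) and \(\nabla(f\wedge g)\) agree almost everywhere with \(\nabla f\) or \(\nabla g\) on the sets \(\{f>g\}\), \(\{f<g\}\), and coincide on \(\{f=g\}\). Continuity follows from H\"older's inequality, and translation invariance is clear. Homogeneity holds with \(q=2\). Contravariance follows from the change of variables \(\nabla(f\circ\phi^{-1})=\phi^{-t}\nabla f\circ\phi^{-1}\), which gives \(r=1\).

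For the "only if" direction I would follow the standard reduction from Sobolev valuations to valuations on polytopes. First, restrict \(Z\) to the piecewise linear functions \(\ell_P\) that are linear on each cone over a facet of \(P\in\polyo\), with \(\ell_P=1\) on \(\partial P\), so \(\ell_P\) is a cone function; also restrict to their translates and dilates. Continuity and the lattice property imply that \(Z\) is determined by its values on finite maxima/minima of such "hat" functions, since these are dense in \(W^{1,2}(\rn)\).

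Next, I would exploit the valuation property on the functions \(s\,(1-\ell_P)_+\) and their truncations. The map \(P\mapsto Z\bigl(s(1-\ell_P)_+\bigr)\) then becomes a matrix-valued valuation on \(\polyo\) that is \(\sln\) contravariant and, by translation invariance of \(Z\), carries enough structure to invoke the classification of \(\sln\) contravariant matrix valuations on polytopes (Ludwig's classification of Minkowski-type \(\jz\)-valued valuations). Homogeneity in \(s\), together with the \(\gln\) scaling \(\abs{\det\phi}^r\), pins the degree. The only candidate whose evaluation on \((1-\ell_P)_+\) matches the gradient structure is the LYZ/moment-type matrix \(\sum_{u} h_P(u)^{-1}S_P(u)\,u\otimes u\) (\(u\) over facet normals), which equals \(\int(\nabla \ell_P)^{\odot2}\) on the cone region. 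This forces \(Z=c\,\mathcal I_2\) on these building blocks.

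Finally, continuity extends the identity to all of \(W^{1,2}(\rn)\), since functions expressible through the valuation property from such piecewise linear pieces are dense. The main obstacle is the middle step. One must verify that the induced polytope valuation really falls into the classified family, including handling lower-dimensional and degenerate polytopes via the simple/translation-invariance arguments. One must also exclude spurious terms such as multiples of the moment matrix or of the identity-type term, which are ruled out by translation invariance and by continuity in \(W^{1,2}\) (a term like a volume contribution is not continuous in the gradient norm as \(s\to0\)). The hypothesis \(n>2\) would be used precisely here, where the polytope classification requires \(n\geq3\).
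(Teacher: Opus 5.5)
\textbf{Verdict: the outline is the right one, but two steps fail as you describe them.} The paper only quotes this theorem from Ludwig. Your skeleton is the route it runs for Theorems \ref{thm:m1} and \ref{thm:m2}:
\begin{itemize}
\item reduce to cone functions via Lemma \ref{thm:scm};
\item apply the \(p=2\) case of Theorem \ref{thm:body} to get \(Z(s\ell_P)=\zeta_1(s)M^{2,0}(P^\ast)+\zeta_2(s)M^{0,2}(P)\);
\item kill the first term;
\item compare with Lemma \ref{thm:cal}.
\end{itemize}

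\textbf{First gap: removing the polar moment term.} Translation invariance does not remove it, because it imposes nothing on \(P\mapsto Z(s\ell_P)\) by itself. Letting \(s\to0\) does not either, since that only yields \(\zeta_1(s)\to0\). The \(\gln\) weight also does not suffice, because \(r\) is not prescribed. Taking \(\phi=\lambda I\) gives \(Z(s\ell_{\lambda P})=\lambda^{nr-2}Z(s\ell_P)\), and for \(r=-1\) it is the LYZ term that must vanish while the polar term survives. What works is shrinking the support, as in Lemma \ref{thm:mten}. As \(\lambda\to0\) we have \(\normk{\soboe}{s\ell_{\lambda P}}^2=O(\lambda^{n-2})\to0\), precisely because \(n>2\). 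Meanwhile \(M^{2,0}((\lambda P)^\ast)=\lambda^{-(n+2)}M^{2,0}(P^\ast)\) blows up. Since \(Z(0)=0\) is forced by \(\sln\) contravariance, continuity then gives \(\zeta_1\equiv0\). This step, and not only the polytope classification, is where \(n>2\) is essential.

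\textbf{Second gap: the homogeneity degree is never determined.} After the first step you only have \(Z(s\ell_P)=c\abs{s}^qM^{0,2}(P)\) with \(q\) free. Homogeneity in \(s\) acts on the range and the \(\gln\) behaviour acts on the domain, independently. They force \(r=1\) when \(c\neq0\), but say nothing about \(q\). Indeed, for every \(q>1\) the map \(f\mapsto\int\abs{f}^{q-2}\nabla f\otimes\nabla f\) is, on \(\lip\), a translation invariant valuation with \(r=1\) and degree \(q\). Only continuity in \(\soboe\) excludes it, so you need an explicit argument, for instance:
\begin{itemize}
\item If \(q<2\), take the maximum of \(N\) disjoint translates of \(a\ell_P\), with \(a\to0\) and \(N\) chosen so that \(Na^2\to0\) but \(Na^q\to\infty\); this is possible since \(q<2\). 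The squared norm is of order \(Na^2\), while \(Z\) is of order \(Na^q\).
\item If \(q>2\), take single cones \(a\ell_{\lambda P}\) with \(a^2\lambda^{n-2}\to0\) but \(a^q\lambda^{n-2}\to\infty\).
\end{itemize}
Only once \(q=2\) is known does Lemma \ref{thm:cal} identify \(Z(s\ell_P)\) with a multiple of \(J((s\ell_P)^2)\). The paper's Theorem \ref{thm:m1} avoids this issue only because the degree \(p\) is assumed there; in Ludwig's statement it is not.

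\textbf{The extension step.} This is exactly Lemma \ref{thm:scm}, and it does not follow from density of max/min combinations of hat functions. Minima of cone functions with different apexes or heights are not cone functions, so inclusion--exclusion does not reduce \(Z\) to its values on \(s\ell_P\). Cite that lemma instead.
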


Motivated by this order-two result, we consider the \(p\)-th symmetric moment of \(\nb \log g\). For a sufficiently regular positive function \(g\), set
\[
J^p(g)=\int_{\rn}\bigl(\nb\log g(x)\bigr)^{\odot p}g(x)\,\df x.
\]
We refer to this Fisher-type quantity as the Fisher information tensor of order
\(p\). If \(f\geq0\) and \(g=f^p\), then the chain rule gives
\begin{equation}\label{eqn:jap}
	J^p(f^p)=p^p\mathcal I_p(f)
	=p^p\int_{\rn}(\nb f(x))^{\odot p}\,\df x.
\end{equation}
For arbitrary \(f\in W^{1,p}(\rn)\), we use the right-hand side of
\eqref{eqn:jap} to define \(J^p(f^p)\). For \(p=2\), it agrees with
the classical Fisher information matrix above.
We now consider maps \(Z:\sobop\to\symt\). Such a map is continuous if
\(Z(f_k)\to Z(f)\) whenever \(f_k\to f\) in \(\sobo\) as \(i\to\infty\). 
It is called \(\sln\) contravariant if \(Z(f\circ\phi^{-1})=\phi^{-t}\cdot Z(f)\) for every \(f\in\sobop\) and \(\phi\in\sln\).
It is called translation invariant if \(Z(f\circ\tau^{-1})=Z(f)\) for every \(f\in\sobop\) and translation \(\tau\),
and homogeneous of degree \(q\in\R\) if \(Z(sf)=s^qZ(f)\) for every \(f\in\sobop\) and \(s\in\R\setminus\{0\}\).
Our main results characterize the maps with these properties and give the order-\(p\) analogue of Theorem \ref{thm:Ludwig}.
\begin{thm}\label{thm:m1}
	A map \(Z:\sobop\to\symt\) is a continuous, \(\sln\) contravariant, translation invariant and \(p\)-homogeneous valuation
	if and only if there is a constant \(c\in\R\) such that
	\[Z(f)=cJ^p(f^p)\]
	for every \(f\in\sobop\).
\end{thm}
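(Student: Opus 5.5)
The plan follows Ludwig's argument for Theorem \ref{thm:Ludwig}: reduce to piecewise linear functions, use the valuation property to cut into simple pieces, and identify the tensor on the pieces with \(\sln\) contravariance. The ``if'' direction is immediate. \(J^p(f^p)=p^p\mathcal I_p(f)\) is a valuation, because on \(\{f\ge g\}\) and \(\{f<g\}\) the gradients of \(f\vee g\) and \(f\wedge g\) agree a.e.\ with those of \(f,g\). Continuity follows from H\"older. Translation invariance is clear. Since \(\nb(f\circ\phi^{-1})=\phi^{-t}\nb f\circ\phi^{-1}\) and \(|\det\phi|=1\), the integral transforms by \(\phi^{-t}\cdot\). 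Finally, \(\mathcal I_p(sf)=s^p\mathcal I_p(f)\) holds because \(p\) is an integer.

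For the ``only if'' direction I will use the density of piecewise linear functions with compact support in \(\sobo\) and continuity, so it suffices to determine \(Z\) on this class. Any such function can be written as a finite combination, via \(\vee\), \(\wedge\) and the inclusion--exclusion extension of the valuation property, of ``cone functions'' \(\ell_P\). These are the functions with \(\ell_P(0)=1\), linear on each cone over a facet of a polytope \(P\in\polyo\), and vanishing outside \(P\). Together with \(p\)-homogeneity and translation invariance, this reduces \(Z\) to the map \(P\mapsto Z(\ell_P)\) (and \(Z(s\ell_P)=s^pZ(\ell_P)\)). By the valuation property on functions, this map is a tensor valuation on \(\polyo\). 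It is \(\sln\) contravariant, since \(\ell_{\phi P}=\ell_P\circ\phi^{-1}\).

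Next I will determine the homogeneity in \(P\). I will compare \(Z(\ell_{tP})\) with \(Z(\ell_P)\), using continuity as \(t\to0\) or \(t\to\infty\) together with the scaling of \(W^{1,p}\) norms. The Sobolev norm of \(\ell_{tP}\) behaves like \(t^{(n-p)/p}\) in the gradient part, and the condition \(p<n\) enters here. Combining this with \(\sln\) contravariance should force \(Z(\ell_P)\) to be a simple contravariant valuation homogeneous of degree \(n-p\). An explicit computation gives the candidate: \(\mathcal I_p(\ell_P)=\sum_{u}\frac{h_P(u)^{-p}}{n}\,\cdots\). More precisely, on the cone over the facet with outer normal \(u\) and support value \(h_P(u)\), we have \(\nb\ell_P=-u/h_P(u)\), so
\[
\mathcal I_p(\ell_P)=\frac{(-1)^p}{n}\sum_{u}h_P(u)^{1-p}S_P(u)\,u^{\odot p},
\]
which is the discrete \(L_p\) moment tensor of the surface area measure. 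The key step is a classification lemma: every \(\sln\) contravariant tensor valuation \(\polyo\to\symt\) with this homogeneity and the relevant simplicity or continuity is \(c\) times this tensor. I would prove it on simplices \(T=[0,e_1,\dots,e_n]\) and their \(\sln\) images. Contravariance under the stabilizer of \(T\) (permutations with determinant adjustments), together with diagonal maps acting on dilated simplices, pins down the coefficients of \(Z(\ell_{T})\) in the monomial basis up to one constant. Then I would induct on dimension by cutting \(T\) with hyperplanes through \(0\) and using simplicity, as in Ludwig's matrix case.

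I expect this tensor classification to be the main obstacle. Unlike matrices, order-\(p\) tensors have many more coordinates. Showing that cross terms vanish will require choosing diagonal \(\sln\) maps with signs and scalings that test each multi-index, and using the Cauchy-type functional equation produced by dissecting \(T\) with \(x_1=\lambda x_2\). I will first try to show that, for such cuts, \(Z\) evaluated at the pieces satisfies an additive equation in \(\lambda\). Continuity in \(\lambda\), inherited from continuity of \(Z\) on \(\sobo\), then forces a polynomial or power form matching \(h^{1-p}S\,u^{\odot p}\). Once \(Z=c\,\mathcal I_p\) on all cone functions, the inclusion--exclusion and density argument extends the equality to \(\sobo\), which gives \(Z(f)=cJ^p(f^p)\) after renaming the constant.
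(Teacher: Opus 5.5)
There is a genuine gap: your argument never determines $P\mapsto Z(\lp)$. Your ``if'' direction and the reduction to the values $Z(s\lp)$, $s\in\R$, $P\in\polyo$, agree with the paper. The reduction is Lemma~\ref{thm:scm}, which applies because $p$-homogeneity gives $Z(0)=0$. Two steps fail.

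\emph{The homogeneity step cannot work.} $\sln$ contains no dilations $x\mapsto tx$ with $t\neq\pm1$, so contravariance says nothing about $Z(\ell_{tP})$. Continuity only yields $Z(\ell_{tP})\to Z(0)=0$ as $t\to0$, since $\ell_{tP}\to0$ in $\sobo$ when $p<n$. A continuous function of $t$ tending to $0$ need not be a power of $t$. In Theorem~\ref{thm:Ludwig} the weight $\abs{\det\phi}^r$ of the $\gln$ contravariance supplies this scaling. Here the scaling is an output of a classification, not an input. Indeed, $P\mapsto M^{p,0}(P^\ast)$ is a genuine $\sln$ contravariant valuation on $\polyo$, homogeneous of degree $-(n+p)$, and it has to be ruled out.

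\emph{Your classification lemma is set up on polytopes where $Z$ is not defined.} The map $P\mapsto Z(\lp)$ lives only on $\polyo$. For $T=[0,e_1,\dots,e_n]$ the would-be cone function equals $1-x_1-\dots-x_n$ on $T$ and $0$ outside. It therefore jumps across the facets of $T$ through the origin and is not in $\sobo$, so $Z(\ell_T)$ is undefined. The same holds for every piece of a dissection by a hyperplane $x_1=\lambda x_2$ through the origin. Moreover, ``simple'' is meaningless on $\polyo$, which contains no lower-dimensional polytopes. Passing from polytopes with the origin in the interior to such simplices is exactly the hard part of centro-affine valuation theory, and for order-$p$ tensors your proposal only sketches it.

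What is missing is the Haberl--Parapatits classification, Theorem~\ref{thm:body}. The paper applies it to $Y_s(P)=Z(s\lp)$, a continuous (hence Borel measurable) $\sln$ contravariant valuation on $\polyo$, with no homogeneity assumption. This gives $Z(s\lp)=\z_1(s)M^{p,0}(P^\ast)+\z_2(s)\mop(P)$. Your scaling idea belongs after this step. We have $M^{p,0}((P/k)^\ast)=k^{n+p}M^{p,0}(P^\ast)$ and $\mop(P/k)=k^{p-n}\mop(P)$, while $s\ell_{P/k}\to0$ in $\sobo$ for $p<n$. Continuity therefore forces $\z_1\equiv0$. The paper uses a union of disjoint shrinking translates here, but a single dilate already suffices. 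Then $p$-homogeneity gives $\z_2(s)=\g s^p$. By your facet computation (or Lemma~\ref{thm:cal} with constant $\a$), this equals $cJ^p((s\lp)^p)$ for a suitable $c$. Lemma~\ref{thm:scm} together with density then extends the identity to all of $\sobo$.
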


Let \(\a:\R\to\R\) be a bounded function. Due to the chain rule, it suffices to consider the following generalization in the non-homogeneous case
\begin{equation*}
	\jap(f^p)=p^p\int_{\rn}\left(\nb f(x)\right)^{\odot p}\a(f(x))\df x.
\end{equation*}
Let \(Z:\sobo\to\symt\), and let \(\Phi(s)=Z(s\lp)\) for every \(s\in\R\) and \(\lp\in\pip\) (see Section \ref{sec:two} for definitions).
We write \(\bp\) for the collection of \(Z\) such that \(\Phi\in C^n(\R)\) and \(s^{k-p}\Phi^{(k)}(s)\) is bounded for \(k=0,1,2, \dots, n\).
We obtain a more general classification in this case.
\begin{thm}\label{thm:m2}
	A map \(Z:\sobop\to\symt\) is a continuous, \(\sln\) contravariant, translation invariant valuation
	with \(Z\in\bp\) and \(Z(0)=0\)
	if and only if there is a continuous and bounded function \(\a:\R\to\R\) such that	\[Z(f)=\jap(f^p)\]
	for every \(f\in\sobop\).
\end{thm}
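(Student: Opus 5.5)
The approach follows Ludwig's scheme for Theorem \ref{thm:Ludwig}: reduce $Z$ to its values on piecewise linear functions $\lp\in\pip$ (the functions whose graphs are "pyramids" over polytopes $P\in\polyo$), identify those values through $\sln$ contravariance, and extend by continuity and density. The first step is the easy direction. For bounded continuous $\a$, the map $\jap(f^p)$ is finite and continuous on $\sobop$: use H\"older on $(\nb f)^{\odot p}$ together with dominated convergence along a.e. convergent subsequences. It is a valuation because $\nb(f\vee g)$ and $\nb(f\wedge g)$ a.e. coincide with $\nb f$ or $\nb g$ on the sets $\{f\ge g\}$, $\{f<g\}$, and $\nb f=\nb g$ a.e. on $\{f=g\}$. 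Translation invariance and $\sln$ contravariance follow from $\nb(f\circ\phi^{-1})=\phi^{-t}\nb f\circ\phi^{-1}$ and $\det\phi=1$. Membership in $\bp$ comes from computing $\Phi(s)=p^p\int(\nb(s\lp))^{\odot p}\a(s\lp)$ explicitly on cones. The value $Z(0)=0$ is immediate.

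For the converse, fix $s$ and consider $P\mapsto Z(s\lp)$ on $\polyo$. Since $(s\ell_{P\cup Q})$ is the max/min combination of $s\ell_P,s\ell_Q$ when $P\cup Q$ is convex (up to the sign of $s$), this is an $\sln$ contravariant, $\symt$-valued valuation on polytopes containing the origin. Translation invariance in $\rn$ does not transfer directly, since translating $\lp$ is not $\ell_{P+x}$. To get the right structure I would instead decompose $\lp$ into pieces $\ell_T$ over simplices $T=[0,v_1,\dots,v_n]$ (valuation/inclusion–exclusion). Then I would determine $Z(s\ell_T)$ using contravariance under the stabilizer of $T$ together with the homogeneity in dilations $\ell_{\lambda T}(x)=\ell_T(x/\lambda)$.

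The key intermediate claim is that $Z(s\ell_T)=\sum_{i}c_i(s)\,|T_i|\,(u_i/h_i)^{\odot p}$-type expressions over facets $T_i$ of $T$ through the origin. Equivalently, $Z(s\lp)=\int_{\rn}\Psi(s,\nb(s\lp))$ with $\nabla\lp$ constant on each cone over a facet. I would then use contravariance to show $\Psi(s,y)=\beta(s)\,y^{\odot p}$. The relevant facts are that $\sln$ acts transitively on nonzero vectors, and that the only $\sln$-equivariant symmetric $p$-tensor attached to a vector $y$ under its stabilizer is a multiple of $y^{\odot p}$, which holds for $p<n$. Here the hypothesis $p<n$ enters, together with the standard classification of $\sln$ contravariant tensor valuations on polytopes (the $\sln$ analogues in \cite{2017LS,2014HP}). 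The possible extra terms (e.g. the moment-type tensor associated with the volume of $P$, and tensors supported at the origin) must be excluded. For this I would use translation invariance combined with continuity: moving the apex to approximate a translated pyramid in $\sobop$ forces those coefficients to vanish.

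The second step recovers $\a$. The growth conditions in $\bp$, namely $s^{k-p}\Phi^{(k)}$ bounded for $k\le n$, are used to solve the resulting ODE/integral identity relating $\Phi(s)$ to $\int_0^s\a$-type expressions. This yields $\beta(s)\cdot(\cdot)$ in the form $p^p\int(\nb(s\lp))^{\odot p}\a(s\lp)$ with $\a$ continuous and bounded. Continuity of $\a$ comes from $\Phi\in C^n$, and boundedness comes from the bound with $k$ chosen appropriately. Finally, both $Z$ and $\jap(\cdot^p)$ are continuous valuations agreeing on piecewise linear functions. Using the standard density lemma in $\sobop$, in which finite max/min combinations of translates of $s\lp$ are dense and the valuation is determined by them (as in \cite{2011Monika,2012Monika}), they agree everywhere. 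Theorem \ref{thm:m1} then follows by specializing $\a$ constant through $p$-homogeneity.

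The main obstacle is the polytope classification step. One must show that translation invariance plus continuity kill every $\sln$ contravariant tensor valuation on $\polyo$ except the gradient-type term. One must also carry out the Cauchy-type functional equation in $s$ that produces $\a$, where the $C^n$ hypothesis in $\bp$ must be used carefully.
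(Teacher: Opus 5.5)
Your outer skeleton matches the paper: the ``if'' direction, restricting to $s\lp$, a classification on $\polyo$, inversion in $s$ via $\bp$, and extension by Lemma~\ref{thm:scm} and density. The central step, however, would not go through as you describe it.

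Applying Theorem~\ref{thm:body} to $Y_s(P)=Z(s\lp)$ (Borel measurable by continuity of $Z$) gives
\[
Z(s\lp)=\z_1(s)\,\mpo(P^\ast)+\z_2(s)\,\mop(P).
\]
As long as $\z_1\neq0$, $Z(s\lp)$ is not of the form $\int\Psi(s,\nb(s\lp))$ with $\Psi(s,\cdot)$ $\sln$ equivariant. By your own stabilizer argument, such integrals are multiples of $\mop(P)$. So your ``equivalent'' local form presupposes what has to be proved.

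Your mechanism for removing the extra term does not work. Translation invariance only says that the pyramid over $P+x$ with apex $x$ has the same value as $s\lp$. That function is not a cone function with apex at the origin, so Theorem~\ref{thm:body} gives no second expression to compare with. Letting the apex approach $\partial P$ makes the $\sobop$-norm blow up, so continuity yields nothing either.

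The missing idea is scaling. Since $\ell_{\l P}(x)=\lp(x/\l)$,
\[
\normk{\sobo}{s\ell_{\l P}}^p=\abs{s}^p\left(\l^{n}\normk{p}{\lp}^p+\l^{n-p}\normk{p}{\nb\lp}^p\right)\to0\qquad(\l\to0^+),
\]
and this holds precisely because $p<n$. Meanwhile $\mpo((\l P)^\ast)=\l^{-(n+p)}\mpo(P^\ast)$ and $\mop(\l P)=\l^{n-p}\mop(P)$. Continuity of $Z$ at $0$ and $Z(0)=0$ then force $\z_1\equiv0$. The paper instead uses the disjoint union of translates $\tau_i(P/k^i)$, which is where translation invariance enters there; a single shrinking copy already suffices. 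This is where $p<n$ is used, not in your stabilizer argument. The tensors fixed by the stabilizer of $y\neq0$ in $\sln$ are the multiples of $\tenp{y}$ for every $p$.

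The simplex route has a similar problem. For $T=[0,v_1,\dots,v_n]$ the origin is a vertex, so there is no cone function $\ell_T$ in $\pip$. The restriction of $\lp$ to the cone over a facet jumps across the facets through $0$ and is not in $\sobop$, so it cannot arise from $\lp$ by max/min inclusion--exclusion. Also, $Z$ has no homogeneity under dilations, since these are not in $\sln$. Extending $Y_s$ to such simplices and determining it there is the substantial content of Theorem~\ref{thm:body}, and it needs the measurability hypothesis. Just invoke that theorem.

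Finally, no functional equation in $s$ arises when recovering $\a$. With $\b_1(s)=n(-p)^{-p}s^{n-p}\b(s)$, set $\a=\b_1^{(n)}/n!$.
\begin{itemize}
\item Boundedness of $\a$ needs the bounds on $s^{l-p}\b^{(l)}$ for all $p\le l\le n$, not a single one.
\item The bounds in $\bp$ also give $\b_1^{(k)}(s)\to0$ as $s\to0$ for $k<n$. This kills the boundary terms in the repeated integration by parts, which yields $\b(s)=(-p)^ps^{p-n}\int_0^s\a(t)(s-t)^{n-1}\df t$.
\item Lemma~\ref{thm:cal} then identifies $\b(s)\mop(P)$ with $\jap((s\lp)^p)$.
\end{itemize}
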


%%%%%%%%%%%%%%%%%%%%%%%%%%%%%%%%%%%%%%%%%%%%%%%%%%%%%%%%%%%%%%%%%%%%%%%%%%%%%%%%
%%%%%%%%%%%%%%%%%%%%%%%%%%%%%%%%%%%%%  Sections %%%%%%%%%%%%%%%%%%%%%%%%%%%%%%%%%%%%%
%%%%%%%%%%%%%%%%%%%%%%%%%%%%%%%%%%%%%%%%%%%%%%%%%%%%%%%%%%%%%%%%%%%%%%%%%%%%%%%%

\section{Preliminaries}\label{sec:two}

We work in \(n\)-dimensional Euclidean space \(\rn\) with standard scalar product \(\inp{\cdot}{\cdot}\) and induced norm \(\abs{\cdot}\).
We write \(\sph=\set{x\in\rn:\abs{x}=1}\) for the unit sphere of \(\rn\).
The \(k\)-dimensional Hausdorff measure in Euclidean spaces is denoted by \(\mathcal H^k\).

A symmetric tensor on \(\rn\) of order \(p\) is defined as a \(p\)-multilinear function from \((\rn)^p\) to \(\R\).
The symmetric tensor product of tensors \(\zlt_i\in{\rm Sym}^{p_i}(\rn)\) for \(i=1,\dots,k\) is
\[(\zlt_1\odot\dots\odot\zlt_k)(v_1,\dots,v_p)=\frac1{p!}\sum_\sigma(\zlt_1\otimes\dots\otimes\zlt_k)(v_{\sigma(1)},\dots,v_{\sigma(p)})\]
for \(v_1,\dots,v_p\in\rn\), where \(p=p_1+\dots+p_k\), the ordinary tensor product is denoted by \(\otimes\),
and we sum over all of the permutations of \(\set{1,\dots,p}\).
For \(\zlt\in\symt\) and \(\phi\in\gln\), we denote by \(\phi\cdot\zlt\) the natural action of \(\phi\) to \(\zlt\). That is,
\[(\phi\cdot\zlt)(v_1,\cdots,v_p)=\zlt(\phi^t v_1,\cdots,\phi^t v_p),\]
where \(\phi^t\) is the transpose of \(\phi\).
Let \(\set{e_1,\cdots,e_n}\) be the standard basis of \(\rn\). Then the tensors \(e_{i_1}\odot\cdots\odot e_{i_p}\) with \(1\leq i_1\leq\cdots\leq i_p\leq n\) form a basis of \(\symt\).
This allows us to write \(\zlt\in\symt\) as
\[\zlt=\sum_{1\leq i_1\leq\cdots\leq i_p\leq n}t_{i_1\ldots i_p}e_{i_1}\odot\cdots\odot e_{i_p},\]
where \(t_{i_1\cdots i_p}=\binom{p}{m_1 \ldots m_n}\zlt(e_{i_1},\cdots,e_{i_p})\),
\(m_k\) counts how often the number \(k\) appears among the indices \(i_1,\ldots,i_p\) for \(k = 1,\ldots,n\).
We consider the Euclidean topology with the corresponding dimension, and thus the norm is given by
\[\norm{\zlt}=\left(\sum_{1\leq i_1\leq\cdots\leq i_p\leq n}t_{i_1\ldots i_p}^2\right)^{1/2}.\]
In particular, taking \(\zlt=x^{\odot p}\) for \(x\in\rn\), we have \(\zlt(e_{i_1},\cdots,e_{i_p})=\inp{x}{e_{i_1}}\cdots\inp{x}{e_{i_p}}=x_{i_1}\cdots x_{i_p}\).
Since  \(x_{i_k}^2\leq\abs{x}^2\) for every \(k=1,\cdots,p\), we have \(\norm{\zlt}\leq\l_{n,p}\abs{x}^p\), where
\(\l_{n,p}=\left(\sum_{1\leq i_1\leq\cdots\leq i_p\leq n}\binom{p}{m_1 \ldots m_n}^2\right)^{\frac12}\).

Next, we collect basics regarding convex bodies, which are referred to \cite{2014Sch}.
Let \(\kn\) denote the space of convex bodies, i.e., compact convex subsets in \(\rn\),
and \(\kno\) be the subspace of \(\kn\) whose elements contain the origin in their interiors.
Let \(\polyo\) denote the space of convex polytopes containing the origin in their interiors.

For \(K\in\kn\), define its support function \(h(K,\cdot):\rn\to\R\) by \[h(K,x)=\max\{\inp{x}{y}:y\in K\},\qquad x\in\rn.\]
For \(K\in\kno\), its radial function \(\rho(K,\cdot):\rn\setminus\{o\}\to\R\) is defined by
\[\rho(K,x)=\max\{t>0 : tx\in K\},\qquad x\in\R^n\setminus\{o\}.\]
And the polar body \(K^{\ast}\) of \(K\) is defined by
\[K^{\ast}=\{x\in\rn:\inp{x}{y}\le1 ~\text{for all}~ y\in K\}.\]
Let \(\partial K\) be the boundary of \(K\) and \(N_K(x)\) be the outer unit normal vector at \(x\in\partial K\). Then \[\nb h(K^\ast,x)=\frac{N_K(x)}{h(K,N_K(x))}.\]

For \(P\in\polyo\), the map \(\mpo:\polyo\to\symt\) is defined by \[\mpo(P)=\int_Px^{\odot p}\df x,\]
and the map \(\mop:\polyo\to\symt\) is defined by \[\mop(P)=\int_{\sph}u^{\odot p}\df S_p(P,u),\]
where \(S_p(P,\cdot)\) is the \(L_p\) surface area measure of \(P\). When \(p=1\), \(S_1(P,\cdot)\) corresponds to the classical surface area measure of \(P\).
A map \(\mu:\polyo\to\symt\) is called \(\sln\) contravariant if \(\mu(\phi P)=\phi^{-t}\cdot\mu(P)\) for every \(\phi\in\sln\) and \(P\in\polyo\). It is called Borel measurable if the preimage of every open set is a Borel set.
We make essential use of the following classification of valuations on polytopes established in \cite{2017HP}.
\begin{thm}[\cite{2017HP}]\label{thm:body}
	Let \(p\geq 2\) and \(n\geq3\). A map \(\mu:\polyo\to\symt\) is a Borel measurable and \(\sln\) contravariant valuation
	if and only if there are constants \(c_1,c_2\in\R\) such that
	\[\mu(P)=c_1M^{p,0}(P^\ast)+c_2\mop(P)\]
	for every \(P\in\polyo\).
\end{thm}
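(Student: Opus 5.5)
The plan has two parts: check that the two maps have the stated properties, then prove that nothing else does. The second part follows the inclusion–exclusion and functional-equation method that Ludwig introduced for \(\sln\) covariant valuations on polytopes.

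\emph{Sufficiency.} Let \(\phi\in\sln\).
\begin{itemize}
\item Since \((\phi P)^\ast=\phi^{-t}P^\ast\) and \(\det\phi^{-t}=1\), the substitution \(x=\phi^{-t}y\) gives \(M^{p,0}((\phi P)^\ast)=\phi^{-t}\cdot M^{p,0}(P^\ast)\).
\item If \(P,Q,P\cup Q\in\polyo\), then \((P\cup Q)^\ast=P^\ast\cap Q^\ast\) and \((P\cap Q)^\ast=P^\ast\cup Q^\ast\). Additivity of the integral over sets therefore shows that \(P\mapsto M^{p,0}(P^\ast)\) is a valuation.
\item For \(\mop\), write \(\df S_p(P,\cdot)=h(P,\cdot)^{1-p}\df S(P,\cdot)\). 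Then \(\mop(P)\) is a finite sum over facets \(F\) with normal \(u_F\) of \(h(P,u_F)^{1-p}\,\mathcal H^{n-1}(F)\,u_F^{\odot p}\).
\item Each facet term equals \(n\,\mathcal H^n(\mathrm{conv}(o,F))\,h(P,u_F)^{-p}\,u_F^{\odot p}\). Here \(u_F/h(P,u_F)\) is the vertex of \(P^\ast\) dual to \(F\), so the term transforms correctly under \(\sln\), and the sum is a valuation by the usual cone-volume argument.
\item Continuity gives Borel measurability in both cases.
\end{itemize}

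\emph{Necessity, reduction.} Let \(\mu\) be a Borel measurable, \(\sln\) contravariant valuation. By the inclusion–exclusion principle I would extend \(\mu\) to polytopes that contain the origin, possibly on the boundary. Such an extended valuation is determined by its values on the simplices \(\mathrm{conv}(o,v_1,\dots,v_n)\). By \(\sln\) contravariance, each of these is reduced to the dilates \(sT^n\), \(s>0\), of the standard simplex \(T^n=\mathrm{conv}(o,e_1,\dots,e_n)\), together with lower-dimensional simplices.

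Next I would use the stabilizer of \(T^n\) in \(\sln\). This is the group of permutations of \(e_1,\dots,e_n\), composed with a sign change if needed to make the determinant one. Contravariance under it restricts \(\mu(sT^n)\) to a few \(\symt\) components, essentially the symmetric combinations of \((e_1+\dots+e_n)^{\odot p}\) and \(e_i^{\odot p}\). Diagonal maps in \(\sln\) relate the dilation parameters along different coordinate directions. Lower-dimensional simplices are handled with maps that fix a hyperplane but shear or stretch transversally; contravariance should force \(\mu\) to vanish on them, or at least restrict it strongly.

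\emph{Functional equation.} Cut \(sT^n\) by the hyperplane through \(o\) and \(e_3,\dots,e_n\) that meets the edge \([e_1,e_2]\) at \((1-\lambda)e_1+\lambda e_2\). Both pieces are \(\sln\)-images of dilated standard simplices, up to a scaling. The valuation identity then gives a functional equation in \(s\) and \(\lambda\) for the coefficient functions of \(\mu(sT^n)\). Borel measurability turns its Cauchy-type solutions into power functions. I expect the only admissible homogeneity degrees to be \(-(n+p)\), matching \(M^{p,0}(P^\ast)\), and \(n-p\), matching \(\mop\). The remaining freedom should then be exactly the two constants \(c_1,c_2\). Finally, comparing \(\mu\) with \(c_1M^{p,0}(\cdot^\ast)+c_2\mop\) on all simplices gives equality on \(\polyo\).

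\emph{Main obstacle.} The hardest step is the analysis of this functional equation on the tensor components. For \(p\ge2\) many components are involved, and one must show that they all collapse to the two-parameter family, with no exotic solutions. The condition \(n\ge3\) should enter exactly here, by providing enough vertices fixed by the cutting maps. I would first carry out the analysis for \(p=2\), where it is matrix-valued, and then induct on the multi-index pattern \((m_1,\dots,m_n)\) of the components.
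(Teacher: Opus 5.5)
The paper gives no proof of this theorem; it quotes it from Haberl and Parapatits, so your argument has to stand on its own. Your sufficiency part is fine. (The identity $(P\cap Q)^\ast=P^\ast\cup Q^\ast$ needs $P\cup Q$ convex, which is exactly the situation in the valuation property.) The necessity part, however, has a genuine gap at its first step, and the decisive step is not carried out.

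\textbf{The reduction leaves $\polyo$.} You extend $\mu$ to polytopes with $o\in\partial P$ and reduce to $sT^n=\mathrm{conv}(o,se_1,\dots,se_n)$. That is Ludwig's method for the class $\mathcal P^n_0$ of polytopes merely containing $o$, and it does not transfer to $\polyo$. Inclusion--exclusion only computes values from values already in the domain. It cannot create values on bodies outside $\polyo$, since any piece in $\polyo$ contains a neighbourhood of $o$.

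The solution $P\mapsto M^{p,0}(P^\ast)$ is exactly where this breaks. If $o\in\partial P$ then $P^\ast$ is unbounded; for example $(sT^n)^\ast=\{x: x_i\le 1/s,\ i=1,\dots,n\}$, and $\int_{P^\ast}x^{\odot p}\,\df x$ diverges. So on $sT^n$ there is nothing of degree $-(n+p)$ for you to ``match'', and your final comparison ``on all simplices'' is undefined for the $c_1$-term. Even $\mop$ only makes sense there after discarding the facets through $o$, because $h(P,u)^{1-p}=\infty$ on them.

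Structurally, $M^{p,0}(P^\ast)=\sum_v\int_{\mathrm{conv}(o,G_v)}x^{\odot p}\,\df x$ is a sum over the \emph{vertices} $v$ of $P$, where $G_v$ is the facet of $P^\ast$ dual to $v$. Each summand depends on all facets through $v$. A valuation determined by its values on cones $\mathrm{conv}(o,F)$ can only produce sums of facet-local terms. As an illustration outside the theorem's range ($n=2$, $p=0$), take triangles containing $o$ with cone areas $a_1,a_2,a_3$. Then $V(P^\ast)=\sum_i\frac1{2a_i}+\frac{a_1^2+a_2^2+a_3^2}{4a_1a_2a_3}$. Its mixed partial $\partial_{a_1}\partial_{a_2}$ does not vanish, so it is not of the form $\sum_i g(a_i)+\mathrm{const}$, which any $\mathrm{SL}(2)$ invariant valuation on polygons containing $o$ would have to be.

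Hence your scheme can at best see the $c_2\mop$ part, i.e.\ a $\mathcal P^n_0$-type classification. The polar term is precisely what distinguishes $\polyo$, and capturing it requires test bodies that keep the origin in the interior. For simplices with $o$ in the interior, the $\sln$-orbits already depend on $n+1$ parameters: the barycentric coordinates of $o$ and the volume.

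\textbf{The core step is only announced.} Solving the functional equations on all components of $\symt$ and showing that exactly a two-parameter family survives is stated as an expectation (``I expect'', ``should''). That is the whole content of the theorem, so the outline does not yet contain an argument for it.

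\textbf{Two smaller inaccuracies.}
\begin{itemize}
\item The stabilizer of $T^n$ in $\sln$ consists only of even permutation matrices; a linear map fixing $T^n$ must permute $e_1,\dots,e_n$, so no sign change is allowed.
\item The subspace of $\symt$ fixed by this stabilizer contains an orbit-sum tensor for every partition of $p$ into at most $n$ parts. That is already three independent components for $p=3$, not just combinations of $(e_1+\dots+e_n)^{\odot p}$ and $\sum_i e_i^{\odot p}$.
\end{itemize}
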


Let \(1\leq p<\infty\). For a measurable function \(f:\rn\to\R\), define its \(L^p\)-norm as
\[\normk{p}{f}=\left(\int_{\rn}\abs{f(x)}^p\df x\right)^{1/p}.\] 
The Banach space \(L^p(\rn)\) is the set of measurable functions with finite \(L^p\)-norms.
Let \(\sobo\) denote the Sobolev space of all functions \(f\in\lprn\) whose weak gradients also belong to \(\lprn\).
Here, a measurable function \(\nb f:\rn\to\rn\) is called the weak gradient of \(f\in\lprn\) if 
\[\int_{\rn}\inp{\nu(x)}{\nb f(x)}\df x=-\int_{\rn}f(x)\nb\cdot\nu(x)\df x\]
for every compactly supported smooth vector field \(\nu:\rn\to\rn\), where \(\nb\cdot\nu=\frac{\partial\nu_1}{\partial x_1}+\cdots+\frac{\partial\nu_n}{\partial x_n}\).
For every \(f\in\sobo\), denote the Sobolev norm by
\[\normk{\sobo}{f}=\left(\normk{p}{f}^p+\normk{p}{\nb f}^p\right)^{1/p},\] where \(\normk{p}{\nb f}\) denotes the \(L^p\)-norm of \(\abs{\nb f}\).

For \(f,g\in\sobo\), the set \(\set{x\in\rn: f(x)=g(x)\text{ and }\nb f(x)\neq\nb g(x)}\) has measure zero (see \cite{2001LL}).
Thus, we have \(f\vee g, f\wedge g\in\sobo\) and for almost every \(x\in\rn\),
\begin{equation}\label{fvg}
	\nb(f\vee g)(x)=
	\begin{cases}
		\nb f(x), &\text{if~}f(x)>g(x),\\
		\nb g(x), &\text{if~}f(x)<g(x),\\
		\nb f(x)=\nb g(x), &\text{if~}f(x)=g(x), 
	\end{cases}
\end{equation}
and
\begin{equation}\label{f^g}
	\nb(f\wedge g)(x)=
	\begin{cases}
		\nb f(x), &\text{if~}f(x)<g(x),\\
		\nb g(x), &\text{if~}f(x)>g(x),\\
		\nb f(x)=\nb g(x), &\text{if~}f(x)=g(x).
	\end{cases}
\end{equation}
Hence \((\sobo, \vee, \wedge)\) is a lattice.

A function \(\ell:\rn\to\R\) is called piecewise affine, if it is continuous and there exist finitely many \(n\)-dimensional simplices \(\Delta_1,\cdots,\Delta_k\subset\rn\)
with pairwise disjoint interiors such that \(\ell|_{\Delta_i}\) is affine for \(i=1, \cdots, k\) and \(\ell\) vanishes outside \(\Delta_1\cup\cdots\cup\Delta_k\).
We denote by \(\lip\) the linear space of all such functions. It is immediate from the definition that \(\lip\) is a subspace of \(\sobo\).
These simplices are called a triangulation of the support of \(\ell\).
The function \(\ell\) is uniquely determined by its values on the vertex set \(V\) of this triangulation.
Moreover, \(\lip\) is dense in \(\sobo\) (see \cite{2009Leoni}).

There is an elementary yet important class of functions as examples.
Let \(P\in\polyo\). We define
\[\lp(x)=\left(1-h(P^\ast,x)\right)\vee0\]
for \(x\in P\) and call them cone functions. The subset of \(\lip\) consisting of all functions \(\lp\) for \(P\in\polyo\) is denoted by \(\pip\).
For every \(\phi\in\gln\), we have \(\ell_{\phi P}=\lp\circ\phi^{-1}\).
It also follows that
\[\nb\lp(x)=-\nb h(P^\ast,x)=-\frac{\rnl_P(x)}{h(P,\rnl_P(x))},\]
where \(\rnl_P(x)=N_P(\rho(P,x)x)\).
We note that multiples and translates of \(\lp\in\pip\) correspond to linear elements within the theory of finite elements (see \cite{2008BS}).

The coarea formula states that if \(f:\rn\to\R\) is a Lipschitz continuous function and \(g:\rn\to\R\) is integrable, then
\[\int_{\rn}g(x)\abs{\nb f(x)}\df x=\int_{\R}\left(\int_{\set{f=t}}g(x)\df \hsd(x)\right)\df t,\] 
where \(\set{f=t}=\set{x\in\rn:f(x)=t}\) is the level set.

Making use of the coarea formula, we have the following calculation.
\begin{lem}\label{thm:cal}
	Let \(s\in\R\) and \(P\in\polyo\). Then for a continuous and bounded function \(\a:\R\to\R\), we have
	\[\jap((s\lp)^p)=(-p)^ps^{p-n}\mop(P)\int_0^s\a(t)(s-t)^{n-1}\df t.\]
\end{lem}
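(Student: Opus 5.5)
We compute directly from the definition \(\jap(f^p)=p^p\int_{\rn}(\nb f)^{\odot p}\a(f)\,\df x\) with \(f=s\lp\). Take \(s>0\) first; the case \(s<0\) is the same computation with orientation signs tracked, and \(s=0\) is trivial since both sides vanish (interpreting \(\int_0^s\) with the prefactor \(s^{p-n}\) as a limit, or directly since \(\nb f=0\)). On \(P\) we have \(\nb(s\lp)(x)=-s\,\rnl_P(x)/h(P,\rnl_P(x))\), and \(\nb(s\lp)=0\) outside \(P\). Since \(\a(0)\) appears only where \(\nb f=0\), that region contributes nothing. Thus
\[
\jap((s\lp)^p)=(-p)^p s^p\int_P \frac{\rnl_P(x)^{\odot p}}{h(P,\rnl_P(x))^p}\,\a(s\lp(x))\,\df x .
\]

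Next we split \(P\) into the cones \(C_i=\operatorname{conv}(\{o\}\cup F_i)\) over the facets \(F_i\) of \(P\), with outer normals \(u_i\) and \(h_i=h(P,u_i)\). On the interior of \(C_i\), \(\rnl_P\equiv u_i\) and \(\lp(x)=1-\inp{x}{u_i}/h_i\). So the integrand's tensor part is the constant \(u_i^{\odot p}/h_i^p\), and
\[
\jap((s\lp)^p)=(-p)^ps^p\sum_i \frac{u_i^{\odot p}}{h_i^p}\int_{C_i}\a\bigl(s(1-\inp{x}{u_i}/h_i)\bigr)\,\df x .
\]
For the remaining scalar integral we use the coarea formula with the Lipschitz function \(\lp\) on \(C_i\), where \(\abs{\nb\lp}=1/h_i\) is constant. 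Alternatively we slice directly: the level set \(\{\lp=r\}\cap C_i\) is the dilate \((1-r)F_i\), with \(\hsd\)-measure \((1-r)^{n-1}\hsd(F_i)\). Hence
\[
\int_{C_i}\a(s\lp)\,\df x = h_i\,\hsd(F_i)\int_0^1\a(sr)(1-r)^{n-1}\,\df r .
\]

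Substituting \(t=sr\) turns the last integral into \(s^{-n}\int_0^s\a(t)(s-t)^{n-1}\df t\). It remains to recognize \(\sum_i h_i^{1-p}\hsd(F_i)\,u_i^{\odot p}=\int_{\sph}u^{\odot p}\df S_p(P,u)=\mop(P)\), using that \(S_p(P,\cdot)=h(P,\cdot)^{1-p}S_1(P,\cdot)\) and that for a polytope \(S_1(P,\cdot)\) is the sum of point masses \(\hsd(F_i)\) at the normals \(u_i\). Collecting the factors gives \((-p)^ps^{p-n}\mop(P)\int_0^s\a(t)(s-t)^{n-1}\df t\).

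The main point needing care is the sign case \(s<0\). There we write \(t=sr\) with \(r\in[0,1]\), so \(\int_0^1\a(sr)(1-r)^{n-1}\df r=s^{-n}\int_0^s\a(t)(s-t)^{n-1}\df t\) still holds: \(\df t=s\,\df r\) and \((s-t)^{n-1}=s^{n-1}(1-r)^{n-1}\), so the factor is \(s^{n}\) with its sign, and the identity holds for all \(s\neq0\). The tensor factor \((-s)^p\) from \((\nb f)^{\odot p}\) combines to \((-p)^ps^p\) regardless of sign. Everything else, namely the boundaries of the \(C_i\) (measure zero) and integrability (\(\a\) bounded, \(P\) compact), is routine.
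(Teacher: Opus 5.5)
Your computation is correct and follows essentially the paper's argument: you compute $\nabla(s\ell_P)=-s\,\widehat N_P/h(P,\widehat N_P)$, apply the coarea formula using that the level sets of $\ell_P$ are dilates of $\partial P$ (giving the factor $(1-r)^{n-1}$, the paper's $t^{n-1}$), substitute to get $s^{-n}\int_0^s\alpha(t)(s-t)^{n-1}\,dt$, and identify $\int_{S^{n-1}}u^{\odot p}h(P,u)^{1-p}\,dS(P,u)=M^{0,p}(P)$. The differences are only organizational: you work cone by cone over the facets, where the tensor factor is constant, while the paper applies the coarea formula on all of $P$ and rescales $t\,\partial P$ back to $\partial P$; you also treat $s\le 0$ explicitly, which the paper leaves implicit.
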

\begin{proof}
	First, for every \(x\in P\), since the gradient of the support function is 0-homogeneous, we obtain
	\begin{align*}
		\abs{\nb h(P^\ast,x)} &= \abs{\nb h(P^\ast,\rho(P,x)x)}
		= \abs{\frac{N_P(\rho(P,x)x)}{h(P,N_P(\rho(P,x)x))}}\\
		&= h(P,N_P(\rho(P,x)x))^{-1}
		= h(P,\rnl_P(x))^{-1}.
	\end{align*}
	Combined with the construction of the cone function, we obtain
	\begin{align*}
		\jap((s\lp)^p) &= (ps)^p\int_{\rn}\tenp{\nb\lp(x)}\a(s\lp(x))\df x\\
		&= (-ps)^p\int_P\tenp{\rnl_P(x)}h(P,\rnl_P(x))^{-p}\a(s(1-h(P^\ast,x)))\df x\\
		&= (-ps)^p\int_P\tenp{\rnl_P(x)}h(P,\rnl_P(x))^{1-p}\abs{\nb h(P^\ast,x)}\a(s(1-h(P^\ast,x)))\df x.
	\end{align*}
	We use the coarea formula to proceed
	\begin{equation}\label{eqn:cf}
		(-ps)^p\int_0^1\int_{t\partial P}\tenp{\rnl_P(x)}h(P,\rnl_P(x))^{1-p}\a(s(1-t))\df\hsd(x)\df t.
	\end{equation}
	Next, for every \(t\in(0,1)\) and \(y\in\partial P\), we notice
	\begin{align*}
		N_P(y)=N_P(\rho(P,y)y)=N_P(\rho(P,ty)ty)=\rnl_P(ty),
	\end{align*}
	and obtain that \eqref{eqn:cf} equals
	\begin{align*}
		& (-ps)^p\int_0^1\int_{\partial P}\tenp{N_P(y)}h(P,N_P(y))^{1-p}\a(s(1-t))t^{n-1}\df\hsd(y)\df t\\
		=& (-ps)^p\left(\int_0^1t^{n-1}\a(s-st)\df t\right)\left(\int_{\sph}\tenp{u}h(P,u)^{1-p}\df S(P,u)\right)\\
		=& (-p)^ps^{p-n}\mop(P)\int_0^s\a(t)(s-t)^{n-1}\df t.
	\end{align*}
\end{proof}

Indeed, we have the following reduction to obtain the classification of continuous valuations on \(\sobop\). 
The proof is omitted since it is similar to \cite[Lemma 3.1]{2016Ma}.
\begin{lem}[\cite{2016Ma}]\label{thm:scm}
	Let \(Z_1,Z_2:\lip\to\symt\) be continuous and translation invariant valuations satisfying \(Z_1(0)=Z_2(0)=0\).
	If \(Z_1(sf)=Z_2(sf)\) for every \(s\in\R\) and \(f\in\pip\), then
	\[Z_1(f)=Z_2(f)\]
	for every \(f\in\lip\).
\end{lem}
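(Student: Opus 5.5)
Since the paper omits the proof of Lemma \ref{thm:scm} and defers to \cite[Lemma 3.1]{2016Ma}, I would adapt that argument. The plan is to show that the difference \(Z=Z_1-Z_2\) vanishes on \(\lip\). Here \(Z\) is a continuous, translation invariant valuation with \(Z(0)=0\) and \(Z(s\lp)=0\) for all \(s\in\R\) and \(P\in\polyo\). Translation invariance then gives \(Z(s\,\lp\circ\tau^{-1})=0\) for every translation \(\tau\). So \(Z\) vanishes on all translates and multiples of cone functions, which are the "linear elements" mentioned earlier. The strategy is to decompose an arbitrary \(\ell\in\lip\), via the valuation property, into pieces built from such functions and lower-dimensional or degenerate remainders. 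Continuity then controls the degenerate pieces.

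\textbf{Step 1: an inclusion--exclusion identity.} The first reduction is to prove that \(Z(\ell)\) depends only on local data at the vertices of a triangulation. For a piecewise affine \(\ell\) with triangulation vertex set \(V\), I would write \(\ell\) via max/min operations in the lattice, in the spirit of the "hat function" decomposition. On each simplex \(\Delta_i\), \(\ell\) agrees with an affine function. At an interior vertex \(v\), the function \(\ell\) near \(v\) is locally \(\ell(v)\) plus a piecewise linear cone. Using the valuation identity repeatedly (inclusion--exclusion for \(\vee,\wedge\)), \(Z(\ell)\) is expressed as a signed sum of values \(Z(g)\). Each \(g\) there is either
\begin{enumerate}
\item of the form \(s\,\ell_P\circ\tau^{-1}\), or
\item a function whose support triangulation has fewer vertices, or is a min/max of such cones.
\end{enumerate}
I would proceed by induction on the number of vertices of the triangulation. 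In the base case the support is a single simplex with one interior vertex, which is exactly a translate of a multiple of \(\lp\) with \(P\) the simplex.

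\textbf{Step 2: handling non-convex stars.} The key difficulty is that the star of a vertex need not be convex, so the local cone is not an \(\lp\). I would handle this by refining the triangulation, which is allowed since refinement does not change \(\ell\). A function whose sublevel-set geometry is a non-convex star can be written as \(g_1\vee g_2\) or \(g_1\wedge g_2\) with \(g_i\) having convex stars. The identity \(Z(g_1\vee g_2)=Z(g_1)+Z(g_2)-Z(g_1\wedge g_2)\) then reduces the problem. Boundary vertices, where \(\ell\) does not vanish outside, are treated by approximation. A function that is locally a cone with apex on the support boundary is the limit in \(\sobo\) of cone functions of polytopes with the origin pushed slightly inside. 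The same holds for degenerate polytopes with \(o\) on the boundary. Here continuity of \(Z\) on \(\sobo\) yields \(Z=0\) on those as well.

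\textbf{Main obstacle.} I expect the main obstacle to be Step 2: organizing the lattice decomposition so that every piece produced is again a genuine cone function \(s\,\lp\) with \(P\in\polyo\), or something strictly simpler in the induction. The delicate part is ensuring the convexity of stars and the positivity/negativity of \(s\) match the max/min operations. Following \cite{2016Ma}, I would split \(\ell=\ell^+ - \ell^-\) using \(\ell\vee0\) and \(\ell\wedge0\). Since \(Z(0)=0\), we have \(Z(\ell)=Z(\ell\vee 0)+Z(\ell\wedge 0)\), so it suffices to treat nonnegative and nonpositive functions separately. This should make the max/min bookkeeping sign-consistent.
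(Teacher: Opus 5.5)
There is a genuine gap, and it sits where all the content of the lemma lies. The paper gives no argument beyond the pointer to \cite[Lemma 3.1]{2016Ma}, so your sketch has to stand on its own.

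Your opening reductions are fine: \(Z=Z_1-Z_2\) is a translation invariant valuation with \(Z(0)=0\) that vanishes on all translates of \(s\lp\), and \(Z(\ell)=Z(\ell\vee0)+Z(\ell\wedge0)\). But Step 1 only asserts that repeated use of the valuation identity writes \(Z(\ell)\) as a signed sum over cones, functions with fewer vertices, and ``min/max of such cones''. No identity is exhibited, and the last class is never reduced: for two cones the valuation property only gives \(Z(c_1\wedge c_2)=-Z(c_1\vee c_2)\), which is circular. Moreover, apart from the boundary step (which is vacuous, see below), Steps 1--2 use only the valuation property, translation invariance and vanishing on cones, and no argument of that kind can succeed. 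Fix \(0\neq\zlt\in\symt\) and set
\[
Z(f)=\Bigl(\int_{\{\nb f=0\}}f(x)\,\df x\Bigr)\zlt .
\]
By \eqref{fvg} and \eqref{f^g} this is a translation invariant valuation on \(\lip\) with \(Z(0)=0\). It satisfies \(Z(s\lp)=0\) for all \(s\) and \(P\), because \(\nb\lp\neq0\) a.e.\ on \(P\). Yet for the pointwise minimum \((2\lp)\wedge1\in\lip\) one gets \(Z\bigl((2\lp)\wedge1\bigr)=2^{-n}\mathcal H^n(P)\,\zlt\neq0\). So a correct argument must first use continuity in the interior of the support, to pass to a dense class of nondegenerate functions without flat pieces and similar coincidences. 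One way is to perturb vertex values on a fixed triangulation, which converges in \(\sobo\). It must then exhibit an explicit \(\vee,\wedge\) decomposition of those functions in which every term is already known to be annihilated. Neither ingredient is in your plan.

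The one place where you do invoke continuity is wrong. Every \(\ell\in\lip\) is continuous and vanishes outside its support, so it is \(0\) at every vertex on the boundary of the support; there is nothing to approximate there. Cone functions whose apex is pushed to the boundary also do not converge in \(\sobo\). Suppose the apex \(x_\varepsilon\) of \(\ell_{P-x_\varepsilon}(\cdot-x_\varepsilon)\) tends to a relative interior point of a facet \(F\) of \(P\). On the pyramid \(\mathrm{conv}(\{x_\varepsilon\}\cup F)\) the gradient has length \(1/d_\varepsilon\), where \(d_\varepsilon\) is the distance from \(x_\varepsilon\) to the affine hull of \(F\). Hence \(\int\abs{\nb\ell}^p\geq\hsd(F)\,d_\varepsilon^{1-p}/n\to\infty\) for \(p>1\). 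The putative limit, with a nonzero value at a boundary apex, is discontinuous and not in \(\lip\).

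For Step 2, two pieces \(g_1,g_2\) do not suffice in general. The clean version is to cover the star of \(v\) by finitely many convex polytopes \(P_1,\dots,P_k\) containing \(v\) in their interiors, and put \(Q_I=\bigcap_{i\in I}P_i\). Then the hat function at \(v\) is \(\bigvee_i\ell_{P_i-v}(\cdot-v)\), while \(\bigwedge_{i\in I}\ell_{P_i-v}(\cdot-v)=\ell_{Q_I-v}(\cdot-v)\), and inclusion--exclusion applies. That handles multiples of hat functions, but a general \(\ell\) is a sum of these, not a lattice combination, so it does not touch the main gap above.
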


%%%%%%%%%%%%%%%%%%%%%%%%%%%%%%%%%%%%%%%%%%%%%%%%%%%%%%%%%%%%%%%%%%%%%%%%%%%%%%%%
%%%%%%%%%%%%%%%%%%%%%%%%%%%%%%%%%%%%%  Sections %%%%%%%%%%%%%%%%%%%%%%%%%%%%%%%%%%%%%
%%%%%%%%%%%%%%%%%%%%%%%%%%%%%%%%%%%%%%%%%%%%%%%%%%%%%%%%%%%%%%%%%%%%%%%%%%%%%%%%

\section{Proofs of the main results}\label{sec:san}

First, we show the ``if'' part of Theorem \ref{thm:m2} in the following lemma.
\begin{lem}\label{thm:if}
	Let \(\a:\R\to\R\) be a continuous and bounded function. Then, for every \(f\in\sobop\), the map \(Z:\sobop\to\symt\), defined by
	\[Z(f)=\jap(f^p),\]
	is a continuous, \(\sln\) contravariant and translation invariant valuation with \(Z\in\bp\) and \(Z(0)=0\).
\end{lem}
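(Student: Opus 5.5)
We verify each listed property of \(Z(f)=\jap(f^p)=p^p\int_{\rn}(\nb f)^{\odot p}\a(f)\,\df x\) in turn. Two of them are immediate. \(Z(0)=0\) holds since \(\nb 0=0\). Translation invariance follows from \(\nb(f\circ\tau^{-1})(x)=\nb f(\tau^{-1}x)\) and the translation invariance of Lebesgue measure.

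For \(\sln\) contravariance, use \(\nb(f\circ\phi^{-1})(x)=\phi^{-t}\nb f(\phi^{-1}x)\). Then \((\phi^{-t}y)^{\odot p}=\phi^{-t}\cdot y^{\odot p}\) by the definition of the action, and the substitution \(x=\phi z\) has Jacobian \(1\). Together these give \(Z(f\circ\phi^{-1})=\phi^{-t}\cdot Z(f)\).

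For the valuation property, split \(\rn\) into the sets \(\{f>g\}\), \(\{f<g\}\) and \(\{f=g\}\). By \eqref{fvg} and \eqref{f^g}, on each set the pair \(\bigl(\nb(f\vee g),(f\vee g)\bigr)\) and \(\bigl(\nb(f\wedge g),(f\wedge g)\bigr)\) coincides a.e. with \(\bigl(\nb f,f\bigr)\) and \(\bigl(\nb g,g\bigr)\) in some order. Hence the integrands of \(Z(f\vee g)+Z(f\wedge g)\) and \(Z(f)+Z(g)\) agree a.e.

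Membership in \(\bp\) comes from Lemma \ref{thm:cal}. With \(c_P=(-p)^p\mop(P)\) we have
\[
\Phi(s)=c_P\,s^{p-n}\int_0^s\a(t)(s-t)^{n-1}\,\df t=c_P\,s^{p-n}F(s),
\]
where \(F(s)=\int_0^s\a(t)(s-t)^{n-1}\df t\). For \(s\neq0\) one may also write \(\Phi(s)=c_P\,s^p\int_0^1\a(su)(1-u)^{n-1}\df u\).

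The function \(F\) is an \(n\)-fold iterated integral of \(\a\) (up to the factor \((n-1)!\)). So \(F\in C^n\) with \(F^{(n)}=(n-1)!\,\a\), and \(|F^{(j)}(s)|\le C\|\a\|_\infty|s|^{n-j}\) for \(0\le j\le n\). By the Leibniz rule, each term of \(\Phi^{(k)}\) for \(s\neq0\) has the form \(s^{p-n-i}F^{(k-i)}(s)\), which is bounded by \(C|s|^{p-k}\). Thus \(s^{k-p}\Phi^{(k)}\) is bounded for \(k=0,\dots,n\).

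The main obstacle in this step is regularity at \(s=0\), since \(p<n\) makes \(s^{p-n}\) singular. I would try the second representation first. Dominated convergence gives continuity of \(\Phi\), and the Taylor expansion of \(F\) gives the \(C^{n}\) behaviour at \(0\). If \(C^n\) at the origin turns out to need more than continuity of \(\a\), I would read the paper's condition as the boundedness requirement on \(\R\setminus\{0\}\) together with \(\Phi\in C^n(\R\setminus\{0\})\) and continuity at \(0\).

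For continuity, let \(f_k\to f\) in \(\sobo\). Pass to a subsequence along which \(f_k\to f\) and \(\nb f_k\to\nb f\) a.e., with a common dominating function \(G\in L^p(\rn)\) for \(|\nb f_k|\). Then the integrand \((\nb f_k)^{\odot p}\a(f_k)\) has norm at most \(\l_{n,p}\|\a\|_\infty G^p\), which is integrable. It converges a.e. by the continuity of \(\a\), so dominated convergence gives \(Z(f_k)\to Z(f)\) along the subsequence. The usual subsequence-of-subsequence argument then yields convergence of the full sequence.
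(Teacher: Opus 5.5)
Apart from one point, your proposal follows the paper's proof. The valuation property from \eqref{fvg} and \eqref{f^g}, translation invariance, $\sln$ contravariance via $\nb(f\circ\phi^{-1})=\phi^{-t}(\nb f\circ\phi^{-1})$ with a unimodular change of variables, and the bound on $s^{k-p}\Phi^{(k)}(s)$ for $s\neq0$ via Lemma~\ref{thm:cal} and the Leibniz rule are exactly the paper's steps.

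Your continuity argument takes a different but valid route. You pass to a subsequence along which $f_k\to f$ and $\nb f_k\to\nb f$ a.e.\ with a common $L^p$ majorant (the partial converse of dominated convergence), and then apply dominated convergence once to the whole integrand. The paper instead splits off $I_1$, which it controls on the full sequence by telescoping $(\nb f_i)^{\odot p}-(\nb f)^{\odot p}$ and H\"older. It needs a subsequence only for $I_2$, where the fixed function $2M\abs{\nb f}$ dominates. Your version is shorter; the paper's avoids the majorant lemma.

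\textbf{The gap.} It is your sentence claiming that the Taylor expansion of $F$ gives $C^n$ behaviour at $s=0$. This is false for merely continuous $\a$.

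\emph{Counterexample.} Take $0<\varepsilon<1$ and $\a(t)=\min\{\abs{t}^\varepsilon,1\}$. For $0<s\le1$ you get $F(s)=B(1+\varepsilon,n)\,s^{n+\varepsilon}$ (Beta function), so $\Phi(s)=(-p)^pB(1+\varepsilon,n)\,s^{p+\varepsilon}\mop(P)$. Then $\Phi^{(p+1)}(s)$ is a nonzero multiple of $s^{\varepsilon-1}\mop(P)$, which is unbounded as $s\to0^+$ (choose $P$ with $\mop(P)\neq0$, e.g.\ any $P$ when $p$ is even). Since $p+1\le n$, this gives $\Phi\notin C^n(\R)$.

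\emph{What Taylor actually gives.} You get $F(s)=\a(0)s^n/n+R(s)$ with $\abs{R^{(j)}(s)}\le C\,\omega(\abs{s})\abs{s}^{n-j}$, where $\omega$ is the modulus of continuity of $\a$ at $0$. Hence $\abs{(s^{p-n}R)^{(k)}(s)}\le C\,\omega(\abs{s})\abs{s}^{p-k}$. This yields $\Phi\in C^p(\R)\cap C^n(\R\setminus\{0\})$, and by the example nothing beyond order $p$ at the origin.

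\emph{Consequences.} The requirement $\Phi\in C^n(\R)$ in $\bp$ cannot be verified under the stated hypotheses. So your fallback is forced, not optional. The paper's proof has the same hole: it reads off $\Phi\in C^n$ from \eqref{eqn:4}, which contains negative powers of $s$ and holds only for $s\neq0$. If $\bp$ is weakened to $C^n(\R\setminus\{0\})$, together with, say, $C^p(\R)$, your argument is complete. Be aware, though, that this changes the class itself. It therefore also changes what Lemma~\ref{thm:nh} can deliver, since there $\b_1\in C^n(\R)$ is exactly what makes $\a$ continuous at $0$.
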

\begin{proof}
	First, since \(\a\) is bounded, there exists \(M>0\) such that
	\[\norm{Z(f)}\leq p^p\int_{\rn}\norm{\tenp{\nb f}\a(f)}\leq p^p\l_{n,p}M\int_{\rn}\abs{\nb f}^p<\infty\]
	for every \(f\in\sobop\). Hence, \(Z\) is well defined.
	
	Next, let \(f\in\sobop\) and \(\set{f_i}_{i=1}^\infty\) be a sequence in \(\sobop\) that converges to \(f\). We have
	\begin{align*}
		\norm{Z(f_i)-Z(f)} &\leq p^p\norm{\int_{\rn}\left(\tenp{\nb f_i}-\tenp{\nb f}\right)\a(f_i)}+p^p\norm{\int_{\rn}\tenp{\nb f}\left(\a(f_i)-\a(f)\right)}\\
		&:= p^pI_1(f_i)+p^pI_2(f_i).
	\end{align*}
	Due to the boundedness of \(\a\), we have
	\begin{align*}
		I_1(f_i) &\leq \int_{\rn}\norm{\tenp{\nb f_i}-\tenp{\nb f}}\abs{\a(f_i)}\\
		&\leq M\int_{\rn}\norm{\tenp{\nb f_i}-\tenp{\nb f}}\\
		&\leq M\int_{\rn}\sum_{k=0}^{p-1}\norm{\ten{\nb f_i}{k}\odot\nb(f_i-f)\odot\ten{\nb f}{(p-1-k)}}\\
		&\leq \l_{n,p}M\int_{\rn}\sum_{k=0}^{p-1}\left(\abs{\nb f_i}^k\cdot\abs{\nb(f_i-f)}\cdot\abs{\nb f}^{p-1-k}\right).
	\end{align*}
	Now, H\"{o}lder's inequality yields
	\[I_1(f_i) \leq \l_{n,p}M\normk{p}{\nb(f_i-f)}\sum_{k=0}^{p-1}\normk{p}{\nb f_i}^k\normk{p}{\nb f}^{p-1-k}\to0,\qquad\text{as }i\to\infty.\]
	On the other hand, for every subsequence \(\xset{f_{i_j}}\subset\set{f_i}\), we are going to show that there is a subsequence \(\xset{f_{i_{j_k}}}\subset\xset{f_{i_j}}\)
	such that \(I_2(f_{i_{j_k}})\) converges to 0, to conclude that \(Z(f_i)\to Z(f)\).
	Let \(\xset{f_{i_j}}\) be a subsequence of \(\set{f_i}\), then \(\xset{f_{i_j}}\) converges to \(f\) in \(\sobop\).
	Thus, there exists a subsequence \(\xset{f_{i_{j_k}}}\subset\xset{f_{i_j}}\) with \(f_{i_{j_k}}\to f\) a.e. as \(k\to\infty\).
	Since \(\a\) is continuous, we have
	\[\a(f_{i_{j_k}})\to\a(f)\text{ a.e.},~\text{i.e. }\abs{\a(f_{i_{j_k}})-\a(f)}\to0\text{~a.e.,}\qquad\text{as}~k\to\infty.\]
	Moreover, since \(\a\) is bounded, we have
	\[\abs{\nb f}\cdot\abs{\a(f_{i_{j_k}})-\a(f)}\leq 2M\abs{\nb f}\in L^p(\rn).\]
	Hence, \(\abs{\nb f}^p\cdot\abs{\a(f_{i_{j_k}})-\a(f)}^p\) is integrable.
	By Lebesgue dominated convergence theorem, we obtain
	\begin{align*}
		\lim_{k\to\infty}\int_{\rn}\abs{\nb f}^p\cdot\abs{\a(f_{i_{j_k}})-\a(f)}^p
		= \int_{\rn}\lim_{k\to\infty}\abs{\nb f}^p\cdot\abs{\a(f_{i_{j_k}})-\a(f)}^p=0.
	\end{align*}
	Therefore,
	\[I_2(f_{i_{j_k}})\leq\l_{n,p}\int_{\rn}\abs{\nb f}^p\cdot\abs{\a(f_{i_{j_k}})-\a(f)}^p\to0,\qquad\text{as }k\to\infty,\]
	which implies the continuity.
	
	Let \(\phi\in\sln\). Then,
	\begin{align*}
		Z(f\circ\phi^{-1}) &= p^p\int_{\rn}\tenp{\nb(f\circ\phi^{-1})}\a(f\circ\phi^{-1})\\
		&= p^p\int_{\rn}\tenp{\left(\phi^{-t}\left(\nb f\circ\phi^{-1}\right)\right)}\a(f\circ\phi^{-1})\\
		&= p^p\int_{\rn}\phi^{-t}\cdot\tenp{\left(\nb f\circ\phi^{-1}\right)}\a(f\circ\phi^{-1})\\
		&= p^p\int_{\rn}\phi^{-t}\cdot\tenp{\nb f}\a(f)\\
		&= \phi^{-t}\cdot Z(f),
	\end{align*}
	which implies the \(\sln\) contravariance.
	
	Let \(\tau\) be a translation. Then,
	\begin{align*}
		Z(f\circ\tau^{-1}) &= p^p\int_{\rn}\tenp{\nb(f\circ\tau^{-1})}\a(f\circ\tau^{-1})\\
		&= p^p\int_{\rn}\tenp{\left(\nb f\circ\tau^{-1}\right)}\a(f\circ\tau^{-1})\\
		&= p^p\int_{\rn}\tenp{\nb f}\a(f)\\
		&= Z(f),
	\end{align*}
	which implies the translation invariance.
	
	It follows from \eqref{fvg} and \eqref{f^g} clearly that \(Z\) is a valuation with \(Z(0)=0\).
	
	Finally, we write \(c=(-p)^p\mop(P)\) and \(\varphi(s)=\int_0^s\a(t)(s-t)^{n-1}\df t\),
	and use Lemma \ref{thm:cal} to obtain
	\begin{align*}
		\Phi(s)=Z(s\lp)=(-p)^ps^{p-n}\mop(P)\int_0^s\a(t)(s-t)^{n-1}\df t=cs^{p-n}\varphi(s).
	\end{align*}
	For \(k=0,1,2,\dots,n-1\), by induction, we have
	\begin{equation}\label{eqn:3}
		\varphi^{(k)}(s)=\frac{(n-1)!}{(n-1-k)!}\int_0^s\a(t)(s-t)^{n-1-k}\df t,
	\end{equation}
	and \(\varphi^{(n)}(s)=(n-1)!\a(s)\). Thus,
	\begin{align}
		\Phi^{(k)}(s) &= c\sum_{j=0}^k{k \choose j}(s^{p-n})^{(k-j)}\varphi^{(j)}(s)\notag\\
		&= c\sum_{j=0}^k{k \choose j}(-1)^{k-j}\frac{(n-p+k-j-1)!}{(n-p-1)!}s^{p-n-k+j}\varphi^{(j)}(s)
		\label{eqn:4}
	\end{align}
	for \(k=0,1,2,\dots,n\). Hence, \(\Phi\in C^n(\rn)\).
	
	Since \(\a\) is bounded, there exist \(m,M\in\R\) such that \(m\leq\a(s)\leq M\) for every \(s\in\R\).
	Combined with \eqref{eqn:3}, we have
	\[\frac{(n-1)!}{(n-k)!}m\leq s^{k-n}\varphi^{(k)}(s)\leq\frac{(n-1)!}{(n-k)!}M,\]
	which implies that \(s^{k-n}\varphi^{(k)}(s)\) is bounded for \(k=0,1,2,\dots,n\).
	By \eqref{eqn:4}, we see that
	\[s^{k-p}\Phi^{(k)}(s)=c\sum_{j=0}^k{k \choose j}(-1)^{k-j}\frac{(n-p+k-j-1)!}{(n-p-1)!}s^{j-n}\varphi^{(j)}(s),\]
	and they are bounded consequently for \(k=0,1,2,\dots,n\).
\end{proof}

Based on Theorem \ref{thm:body}, the next lemma gives the first step towards the classification.
\begin{lem}\label{thm:mten}
	Let \(Z:\lip\to\symt\) be a continuous, \(\sln\) contravariant, translation invariant valuation with \(Z(0)=0\).
	Then, there is a continuous function \(\z:\R\to\R\) such that
	\[Z(s\lp)=\z(s)\mop(P)\]
	for every \(s\in\R\) and \(\lp\in\pip\).
\end{lem}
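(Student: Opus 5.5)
Fix \(s\in\R\) and define \(\mu_s:\polyo\to\symt\) by \(\mu_s(P)=Z(s\lp)\). The plan is to show that \(\mu_s\) is a Borel measurable, \(\sln\) contravariant valuation. Theorem \ref{thm:body} then gives constants \(c_1(s),c_2(s)\) with
\[\mu_s(P)=c_1(s)M^{p,0}(P^\ast)+c_2(s)\mop(P).\]
Next we rule out the first term, and we set \(\z(s)=c_2(s)\).

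\textbf{Valuation property.} For \(P,Q\in\polyo\) with \(P\cup Q\) convex, the polars satisfy \((P\cup Q)^\ast=P^\ast\cap Q^\ast\) and \((P\cap Q)^\ast=\mathrm{conv}(P^\ast\cup Q^\ast)\). Hence
\[h((P\cup Q)^\ast,\cdot)=h(P^\ast,\cdot)\wedge h(Q^\ast,\cdot),\qquad h((P\cap Q)^\ast,\cdot)=h(P^\ast,\cdot)\vee h(Q^\ast,\cdot).\]
It follows that \(\ell_{P\cup Q}=\lp\vee\ell_Q\) and \(\ell_{P\cap Q}=\lp\wedge\ell_Q\). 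For \(s\ge0\) multiplication by \(s\) preserves \(\vee,\wedge\); for \(s<0\) it swaps them. In both cases \eqref{val} for \(Z\) yields the valuation identity for \(\mu_s\).

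\textbf{Contravariance and measurability.} Contravariance follows from \(\ell_{\phi P}=\lp\circ\phi^{-1}\) and the \(\sln\) contravariance of \(Z\). For measurability, \(P\mapsto s\lp\) is continuous from \(\polyo\) with the Hausdorff metric into \(\sobo\). Indeed, \(h(P_k^\ast,\cdot)\to h(P^\ast,\cdot)\) uniformly on compacts, the supports stay in a fixed ball, and the gradients converge a.e. with uniform bounds (since \(P_k\) contains a fixed ball around \(o\)), so dominated convergence applies. Hence \(\mu_s\) is continuous, in particular Borel measurable.

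\textbf{Killing \(c_1\).} This is the step I expect to be the main obstacle; I would exploit translation invariance. Take \(P\) and a translate \(P+y\) with \(o\) still in the interior of both. Then \(s\ell_{P+y}\) is \emph{not} a translate of \(s\lp\), since the apex moves relative to the body, so translation invariance cannot be applied directly. Instead I would use \(Z(0)=0\), translation invariance, and a vanishing-volume argument. The first idea is to decompose \(s\lp\) via the valuation property into pieces over cones \(\mathrm{conv}(o,F)\) and compare with the translated configuration. The more promising route is scaling: \(\lambda P\) for \(\lambda\to0\) gives \(s\ell_{\lambda P}\), and
\[\|s\ell_{\lambda P}\|_{L^p}\to0,\qquad \|\nb(s\ell_{\lambda P})\|_{L^p}^p=|s|^p\lambda^{n-p}\|\nb\lp\|_p^p\to0\]
because \(p<n\). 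So \(s\ell_{\lambda P}\to0\) in \(\sobo\), and continuity with \(Z(0)=0\) gives \(\mu_s(\lambda P)\to0\). Now \(M^{p,0}((\lambda P)^\ast)=\lambda^{-n-p}M^{p,0}(P^\ast)\) blows up, while \(\mop(\lambda P)=\lambda^{n-p}\mop(P)\) tends to \(0\). Choosing \(P\) (for instance a cube, with \(p\) even) such that \(M^{p,0}(P^\ast)\neq0\) forces \(c_1(s)=0\). If \(p\) is odd, \(M^{p,0}\) vanishes on origin-symmetric bodies, so one takes a nonsymmetric \(P\) with \(M^{p,0}(P^\ast)\ne0\). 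Note that this step uses continuity and \(p<n\) rather than translation invariance.

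\textbf{Continuity of \(\z\).} Choose \(P\) with \(\mop(P)\neq0\), e.g. with a nonzero component. Then \(\z(s)\) is a fixed linear functional applied to \(Z(s\lp)\), divided by that nonzero component. Since \(s\mapsto s\lp\) is continuous into \(\sobo\) and \(Z\) is continuous, \(\z\) is continuous. This completes the plan.
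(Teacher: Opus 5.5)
Your proof is correct. Its overall structure is the paper's: fix \(s\), apply Theorem~\ref{thm:body} to \(P\mapsto Z(s\lp)\), then eliminate the \(\mpo(P^\ast)\) term using continuity at \(0\) and \(p<n\).

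The difference lies in how that term is eliminated. The paper builds \(f_k=s(\ell_{\tau_1(P/k)}\vee\dots\vee\ell_{\tau_k(P/k^k)})\) from \(k\) disjoint translated dilates. It then uses translation invariance and the valuation property to write \(Z(f_k)=\sum_{i=1}^k Z(s\ell_{P/k^i})\). Since \(f_k\to0\) while the coefficient \(\sum_{i} k^{i(n+p)}\) of \(\z_1(s)\mpo(P^\ast)\) diverges, the term must vanish. You instead use the single dilate \(s\ell_{\lambda P}\to0\) and read off the factor \(\lambda^{-(n+p)}\) directly.

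For this lemma your route is simpler and does not need translation invariance. The paper's multi-copy construction is the more robust template: it would still force divergence if the unwanted term were homogeneous of degree \(0\), where a single dilate produces no blow-up. That extra strength is not needed here.

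You also make explicit several points the paper passes over:
\begin{itemize}
\item the sign of \(s\) in the valuation identity;
\item Borel measurability, via continuity of \(P\mapsto s\lp\) into \(\sobo\);
\item the need for some \(P\) with \(\mpo(P^\ast)\neq0\), which must be nonsymmetric when \(p\) is odd;
\item the continuity of \(\z\), which the paper simply asserts.
\end{itemize}

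One small correction: \(h((P\cup Q)^\ast,\cdot)=h(P^\ast,\cdot)\wedge h(Q^\ast,\cdot)\) does not follow from \((P\cup Q)^\ast=P^\ast\cap Q^\ast\) alone, because support functions of intersections are not minima in general. The identity is true, but the right reason is different. \(h(K^\ast,\cdot)=1/\rho(K,\cdot)\) is the gauge of \(K\), and \(\rho(P\cup Q,\cdot)=\rho(P,\cdot)\vee\rho(Q,\cdot)\).
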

\begin{proof}
	Define \(Y_s:\polyo\to\symt\) by
	\[Y_s(P)=Z(s\lp)\]
	for every \(s\in\R\) and \(P\in\polyo\). Let \(P,Q\in\polyo\) such that \(P\cup Q\in\polyo\).
	We have \(\lp\vee\ell_Q=\ell_{P\cup Q}\) and \(\lp\wedge\ell_Q=\ell_{P\cap Q}\). Since \(Z\) is a valuation on \(\lip\), we obtain
	\begin{align*}
		Y_s(P)+Y_s(Q) &= Z(s\lp)+Z(s\ell_Q)=Z(s(\lp\vee\ell_Q))+Z(s(\lp\cap\ell_Q))\\
		&= Z(s\ell_{P\cup Q})+Z(s\ell_{P\cap Q})=Y_s(P\cup Q)+Y_s(P\cap Q),
	\end{align*}
	which shows that \(Y_s\) is a valuation. It is also clear that \(Y_s\) is Borel measurable and \(\sln\) contravariant.
	Thus, by Theorem \ref{thm:body}, there exist continuous functions \(\z_1,\z_2:\R\to\R\) such that
	\begin{equation}\label{eqn:5}
		Z(s\lp)=Y_s(P)=\z_1(s)M^{p,0}(P^\ast)+\z_2(s)\mop(P)
	\end{equation}
	for every \(s\in\R\) and \(\lp\in\pip\).
	
	Let \(P\in\polyo\). Take translations \(\tau_1,\dots,\tau_k\) such that \(\tau_i(P/k^i)\) are pairwisely disjoint for \(i=1,2,\dots,k\). Define
	\[f_k=s(\ell_{\tau_1(P/k^1)}\vee\dots\vee\ell_{\tau_k(P/k^k)}),\qquad s\in\R,\]
	where \(\ell_{\tau_i(P/k^i)}=\tau_i\circ\ell_{P/k^i}\).
	Then, \(f_k\to0\) in \(\sobop\) as \(k\to\infty\). By the valuation property, the translation invariance and \eqref{eqn:5}, we have
	\begin{align*}
		Z(f_k) &= \sum_{i=1}^kZ(s\ell_{P/k^i})\\
		&= \sum_{i=1}^k\left(\z_1(s)M^{p,0}\left((P/k^i)^\ast\right)+\z_2(s)\mop(P/k^i)\right)\\
		&= \z_1(s)\sum_{i=1}^kk^{i(n+p)}M^{p,0}(P^\ast)+\z_2(s)\sum_{i=1}^kk^{-i(n-p)}\mop(P).
	\end{align*}
	The continuity of \(Z\) and \(Z(0)=0\) force \(\z_1(s)\equiv0\) for every \(s\in\R\).
\end{proof}

Further assumptions on \(Z(s\lp)\) allow us to obtain the representation of the valuations on cone functions.
\begin{lem}\label{thm:nh}
	Let \(Z:\lip\to\symt\). If \(Z\in\bp\), and there is a continuous function \(\b:\R\to\R\) such that \(Z(s\lp)=\b(s)\mop(P)\),
	then there is a continuous and bounded function \(\a:\R\to\R\) such that
	\[Z(s\lp)=\jap((s\lp)^p).\]
\end{lem}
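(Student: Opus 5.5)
The plan is to reverse the computation of Lemma \ref{thm:cal}. Given $\b$, I want a continuous bounded $\a$ with
\[
\b(s)=(-p)^ps^{p-n}\int_0^s\a(t)(s-t)^{n-1}\,\df t
\]
for $s\neq0$. Then Lemma \ref{thm:cal} yields $Z(s\lp)=\b(s)\mop(P)=\jap((s\lp)^p)$.

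\textbf{Step 1: regularity of $\b$.} Fix one $P_0\in\polyo$ with $\mop(P_0)\neq0$. For example, take the cube $[-1,1]^n$: the $e_1^{\odot p}$-coefficient of $\mop$ is $\int u_1^p\,\df S_p>0$ because $p$ is even, or more generally it is $2$ times the contribution of the facets $\pm e_1$, which are equal for even $p$. If $p$ is odd, take a non-symmetric simplex instead. In either case some coordinate of $\mop(P_0)$ is nonzero. Then $\b(s)$ is a fixed multiple of a coordinate of $\Phi(s)=Z(s\ell_{P_0})$. Hence $Z\in\bp$ gives $\b\in C^n(\R)$, with $s^{k-p}\b^{(k)}(s)$ bounded for $k=0,\dots,n$. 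In particular $\b(0)=0$, which is consistent with $\jap(0)=0$ and covers the case $s=0$.

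\textbf{Step 2: define $\a$.} Set $\varphi(s)=(-p)^{-p}s^{n-p}\b(s)$. Since $n-p$ is a positive integer, $\varphi\in C^n(\R)$. Define
\[
\a(s)=\frac{\varphi^{(n)}(s)}{(n-1)!}.
\]
By the Leibniz rule,
\[
\varphi^{(k)}(s)=(-p)^{-p}\sum_{j=0}^k\binom kj (s^{n-p})^{(k-j)}\b^{(j)}(s).
\]
For $k-j\le n-p$, the factor $(s^{n-p})^{(k-j)}$ is a constant times $s^{n-p-k+j}$, and it vanishes for $k-j>n-p$. So each nonzero term equals a constant times
\[
s^{n-k}\cdot\bigl(s^{j-p}\b^{(j)}(s)\bigr).
\]

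\textbf{Step 3: the two consequences of this expansion.} For $k=n$, every term is a bounded function, so $\a$ is continuous and bounded. For $k\le n-1$, every term is $s^{n-k}$ times a bounded function, so $\varphi^{(k)}(0)=0$ for $k=0,\dots,n-1$. Taylor's formula with integral remainder then gives
\[
\varphi(s)=\int_0^s\frac{\varphi^{(n)}(t)}{(n-1)!}(s-t)^{n-1}\,\df t=\int_0^s\a(t)(s-t)^{n-1}\,\df t.
\]
Multiplying by $(-p)^ps^{p-n}$ for $s\neq0$ recovers the desired identity for $\b$. Combined with Lemma \ref{thm:cal}, this gives the claim for $s\neq0$; for $s=0$ both sides vanish.

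\textbf{Main obstacle.} The delicate point is Step 3: showing that the boundedness hypotheses in $\bp$ yield both the boundedness of $\varphi^{(n)}$ and the vanishing of $\varphi^{(k)}(0)$ for $k<n$. This is where the integrality of $n-p$ is used, since it makes the high derivatives of $s^{n-p}$ vanish. A minor additional point is the choice of $P_0$ with $\mop(P_0)\neq0$ in Step 1.
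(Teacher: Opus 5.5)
Your proof is correct and follows essentially the same route as the paper: your $\varphi$ is the paper's $\b_1/n$, your $\a=\varphi^{(n)}/(n-1)!$ coincides with its $\b_1^{(n)}/n!$, and the Leibniz-rule estimate gives both the boundedness of $\a$ and $\varphi^{(k)}(0)=0$ for $k<n$. The only differences are that Taylor's formula with integral remainder replaces the paper's repeated integration by parts, and that you make explicit two points the paper passes over: the choice of $P_0$ with $\mop(P_0)\neq0$ to transfer the $\bp$ bounds from $\Phi$ to $\b$, and the separate treatment of $s=0$ (for odd $p$ you only assert the simplex claim, but it is true for any simplex when $p\ge2$).
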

\begin{proof}
	First, we notice that \(\b\in C^n(\R)\), since \(\Phi(s)=Z(s\lp)=\b(s)\mop(P)\in C^n(\R)\). Next, write
	\(\b_1(s)=n(-p)^{-p}s^{n-p}\b(s)\) for \(s\in\R\). Then, \(\b_1(s)\in C^n(\R)\) as well, and
	\begin{align*}
		\b_1^{(k)}(s) &= n(-p)^{-p}\sum_{j=0}^k{k \choose j}(s^{n-p})^{(j)}\b^{(k-j)}(s)\\
		&= n(-p)^{-p}\sum_{j=0}^{\min\set{k,n-p}}{k \choose j}\frac{(n-p)!}{(n-p-j)!}s^{n-p-j}\b^{(k-j)}(s)\\
		&= n(-p)^{-p}\sum_{l=\max\set{0,k-n+p}}^k{k \choose l}\frac{(n-p)!}{(n-p-k+l)!}s^{n-p-k+l}\b^{(l)}(s)\\
		&= n(-p)^{-p}\sum_{l=\max\set{0,k-n+p}}^k{k \choose l}\frac{(n-p)!}{(n-p-k+l)!}s^{n-k}s^{l-p}\b^{(l)}(s).
	\end{align*}
	Since \(s^{l-p}\b^{(l)}(s)\) is bounded due to \(0\leq\max\set{0,k-n+p}\leq l\leq k\leq n\), we obtain \(\b_1^{(k)}(s)\to0\) as \(s\to0\)
	for \(k=0,1,2,\dots,n-1\).
	
	Now, for \(s\in\R\), let
	\[\a(s)=\frac1{n!}\b_1^{(n)}(s)=\frac{n(-p)^{-p}}{n!}\sum_{j=0}^{n-p}{n \choose j}\frac{(n-p)!}{(n-p-j)!}s^{n-p-j}\b^{(n-j)}(s).\]
	Then \(\a\) is continuous and bounded. Moreover,
	\begin{align*}
		\int_0^s \a(t)(s-t)^{n-1}\df t &= \frac1{n!}\int_0^s\b_1^{(n)}(t)(s-t)^{n-1}\df t
		= \frac1{n!}\int_0^s(s-t)^{n-1}\df\b_1^{(n-1)}(t)\\
		&= \frac1{n!}\left((s-t)^{n-1}\b_1^{(n-1)}(t)\Big|_0^s-\int_0^s\b_1^{(n-1)}(t)\df(s-t)^{n-1}\right)\\
		&= \frac1{n!}(n-1)\int_0^s\b_1^{(n-1)}(t)(s-t)^{n-2}\df t\\
		&= \cdots\\
		&= \frac1n\int_0^s\b'_1(t)\df t\\
		&= \frac1n\b_1(s),
	\end{align*}
	i.e. \(\b_1(s)=n\int_0^s\a(t)(s-t)^{n-1}\df t\). It follows that
	\[\b(s)=\frac{(-p)^p}ns^{p-n}\b_1(s)=(-p)^ps^{p-n}\int_0^s\a(t)(s-t)^{n-1}\df t.\]
	Therefore, by Lemma \ref{thm:cal}, we obtain
	\begin{align*}
		Z(s\lp)=\b(s)\mop(P)=(-p)^ps^{p-n}\mop(P)\int_0^s\a(t)(s-t)^{n-1}\df t=\jap((s\lp)^p).
	\end{align*}
\end{proof}

Now, we give the proof of Theorem \ref{thm:m2}.
\begin{proof}[Proof of Theorem \ref{thm:m2}]
	The ``if'' part follows from Lemma \ref{thm:if}. It remains to prove the ``only if '' part.
	The classification of valuations on \(\sobo\) is reduced to that on its dense subspace \(\lip\) since \(Z\) is continuous.
	Indeed, it suffices to show the classification on \(f=s\lp\) for \(s\in\R\) and \(\lp\in\pip\) due to Lemma \ref{thm:scm}.
	By Lemma \ref{thm:mten}, there is a continuous function \(\b:\R\to\R\) such that
	\[Z(s\lp)=\b(s)\mop(P)\]
	for every \(s\in\R\) and \(\lp\in\pip\). Further applying Lemma \ref{thm:nh}, we conclude that there is a continuous and bounded function 
	\(\a:\R\to\R\) such that
	\[Z(s\lp)=\jap((s\lp)^p).\]
\end{proof}

When the homogeneity is further assumed, we obtain Theorem \ref{thm:m1} as a corollary.
\begin{proof}[Proof of Theorem \ref{thm:m1}]
	Due to \eqref{eqn:jap}, the Fisher information tensor \(J^p\) is \(p\)-homogeneous. Then, we obtain the ``if'' part from Lemma \ref{thm:if}.
	
	Next, we prove the ``only if'' part. Similar to the proof of Theorem \ref{thm:m2}, it suffices to show the classification on \(f=s\lp\) for \(s\in\R\) and \(\lp\in\pip\). By Lemma \ref{thm:mten}, there is a continuous function \(\b:\R\to\R\) such that
	\[Z(s\lp)=\b(s)\mop(P).\]
	Since \(Z\) is \(p\)-homogeneous, there is \(\g\in\R\) such that \(\b(s)=\g s^p\) for \(s\in\R\).
	If we set \(\Phi(s)=Z(s\lp)\), i.e. \(\Phi(s)=\bar\g s^p\), where \(\bar\g=\g\mop(P),\)
	it is clear that \(\Phi\in C^n(\R)\) and \(s^{k-p}\Phi^{(k)}(s)\) is bounded for \(k=0,1,2,\dots,n\).
	Therefore, Lemma \ref{thm:nh} completes the proof.
\end{proof}
 
%%%%%%%%%%%%%%%%%%%%%%%%%%%%%%%%%%%%%%%%%%%%%%%%%%%%%%%%%%%%%%%%%%%%%%%%%%%%%%%%
%%%%%%%%%%%%%%%%%%%%%%%%%%%%%%%%%%%%%  Sections %%%%%%%%%%%%%%%%%%%%%%%%%%%%%%%%%%%%%
%%%%%%%%%%%%%%%%%%%%%%%%%%%%%%%%%%%%%%%%%%%%%%%%%%%%%%%%%%%%%%%%%%%%%%%%%%%%%%%%

\section*{Acknowledgment}
\addcontentsline{toc}{section}{Acknowledgment}

The authors would like to thank Lukas Parapatits for sharing some preliminary ideas on Fisher information tensors as valuations.
The work of the authors is supported in part by the National Natural Science Foundation of China (Grant No. 12471055).

%%%%%%%%%%%%%%%%%%%%%%%%%%%%%%%%%%%%%%%%%%%%%%%%%%%%%%%%%%%%%%%%%%%%%%%%%%%%%%%%
%%%%%%%%%%%%%%%%%%%%%%%%%%%%%%%%%%%%% Reference %%%%%%%%%%%%%%%%%%%%%%%%%%%%%%%%%%%%%
%%%%%%%%%%%%%%%%%%%%%%%%%%%%%%%%%%%%%%%%%%%%%%%%%%%%%%%%%%%%%%%%%%%%%%%%%%%%%%%%

\end{document}